\newtheorem{theorem}{Theorem}[section]
\theoremstyle{plain}
\newtheorem{lemma}[theorem]{Lemma}
\theoremstyle{definition}
\newtheorem{definition}[theorem]{Definition}
\newtheorem{problem}[theorem]{Problem}
\theoremstyle{remark}
\newtheorem{remark}[theorem]{Remark}
\newtheorem{notation}[theorem]{Notation}
\newtheorem*{ack*}{Acknowledgements}
\numberwithin{equation}{section}
\newcommand{\norm}[2]{\ensuremath{ \left\| #1 \right\|_{#2}}} 
\newcommand{\setcond}[2]{\ensuremath{\left\{#1\,\big|\, #2\right\} }}
\newcommand{\B}{\mathbb B}
\renewcommand{\S}{\mathbb S}
\newcommand{\R}{\mathbb R}
\newcommand{\N}{\mathbb N}
\newcommand{\spa}{\text{span}}
\renewcommand{\L}{\mathscr L}
\newcommand{\id}{\text{Id}}
\newcommand{\U}{\mathcal U}
\newcommand{\X}{\mathcal X}
\newcommand{\Y}{\mathcal Y}
\newcommand{\Z}{\mathcal Z}
\newcommand{\sub}{\text{sub}}
\begin{document}

\title{Linear stability estimates for Serrin's problem via a modified implicit function theorem}  

\author{Alexandra Gilsbach}

\author{Michiaki Onodera}

\affil{\small{Department of Mathematics, School of Science, Tokyo Institute of Technology}}

\date{\small\today}

\maketitle

\begin{abstract}
\noindent\textbf{Abstract.} We examine Serrin's classical overdetermined problem under a perturbation of the Neumann boundary condition. The solution of the problem for a constant Neumann boundary condition exists provided that the underlying domain is a ball. The question arises whether for a perturbation of the constant there still are domains admitting solutions to the problem. Furthermore, one may ask whether a domain that admits a solution for the perturbed problem is unique up to translation and whether it is close to the ball. 
We develop a new implicit function theorem for a pair of Banach triplets that is applicable to nonlinear problems with loss of derivatives except at the point under consideration. Combined with a detailed analysis of the linearized operator, we prove the existence and local uniqueness of a domain admitting a solution to the perturbed overdetermined problem. 
Moreover, an optimal linear stability estimate for the shape of a domain is established.
\end{abstract}

\medskip 
\noindent\textbf{Keywords:}  overdetermined problem \& implicit function theorem \& stability estimates

\medskip
\noindent\textbf{MSC2010:} 35N25 \& 35B35 \& 47J07

\section{Introduction}

We study the shape of a bounded domain $\Omega\subset\mathbb{R}^n$, $n\geq 2$, in which a solution $u$ to the Dirichlet problem
\begin{subequations}
\label{eq:serrin}
\begin{align}
\begin{aligned}
 -\Delta u&=1 \quad \text{in} \ \Omega,\\
 u&=0 \quad \text{on} \ \Gamma = \partial\Omega 
\end{aligned} \label{eq:serrin-1}
\intertext{satisfies the overdetermined boundary condition}
 -\frac{\partial u}{\partial \nu}=f \quad \text{on} \ \Gamma, \label{eq:serrin-2}
\end{align}
\end{subequations}
where $\nu$ is the outer unit normal vector to $\Gamma$ and $f$ is a prescribed positive function defined on $\mathbb{R}^n$. 

The overdetermined problem \eqref{eq:serrin} arises in a shape optimization problem called the Saint-Venant problem, in which one maximizes the torsional rigidity
\begin{equation*}
 P(\Omega)=\sup_{u\in H_0^1(\Omega)\setminus\{0\}}\frac{\left(\int_\Omega u\,dx\right)^2}{\int_\Omega|\nabla u|^2\,dx}
\end{equation*}
of a bar with cross section $\Omega$, among all sets $\Omega$ of equal weighted volume
\begin{equation*}
 V(\Omega)=\int_\Omega f^2\,dx. 
\end{equation*}
The Euler-Lagrange equation, after multiplying a normalizing constant, consists in \eqref{eq:serrin}. 
In the case where $f$ is a constant, P\'olya \citep{polya1948torsional} proved that the maximizer $\Omega$ of $P$ must be a ball with the prescribed volume $V$, using the symmetric rearrangement of a function. 
This applies to a more general situation, where $f$ is radially symmetric and non-decreasing in the radial direction. 

In fact, the same symmetry result also holds for all critical points, namely, if $f$ is a constant, then 
\eqref{eq:serrin} has a solution $u$ if and only if $\Omega$ is a ball. 
In particular, for the normalized constant $f=\tfrac{1}{n}$, $\Omega$ is a ball of radius one and $u(x)=\tfrac{1}{2n}(1-|x-c|^2)$ with $c$ being the center of the ball. 
This well-known symmetry result is due to Serrin \citep{serrin1971symmetry}. 
The proof introduces the method of moving planes motivated by Alexandrov's reflection principle \citep{alexandrov1962characteristic} originally used to establish the soap bubble theorem. 
This symmetry result can be alternatively proven by an ingenious combination of the Rellich-Pohozaev integral identity and elementary inequalities (see Weinberger \citep{weinberger1971remark}, and Brandolini, Nitsch, Salani, and Trombetti \citep{brandolini2008serrin}), or by a continuous version of the Steiner symmetrization (see Brock and Henrot \citep{brock2002symmetry}). 

The objective of this paper is the stability of a domain $\Omega$ under a perturbation of the Neumann boundary condition \eqref{eq:serrin-2}, which naturally arises if one considers the torsional rigidity in anisotropic media. 
Namely, setting $\Omega_0:=\B$, the $n$-dimensional unit ball centered at the origin, and
\begin{equation}
\label{eq:g-perturbation}
 f(x)=\frac{1}{n}+g\left(\frac{x}{|x|}\right) \quad (x\in\mathbb{R}^n\setminus \{0\})
\end{equation}
with a prescribed function $g$ defined on $\Gamma_0:=\S$, where $\S = \partial\B$, we prove the existence and local uniqueness of $\Omega$ admitting a solution $u$ to \eqref{eq:serrin}, and establish a quantitative estimate of the deviation of $\Omega$ from $\Omega_0$ in terms of the perturbation $g$.

The domain deviation is measured by a function $\rho=\rho(\zeta)\in (-1,\infty)$ which defines the star-shaped bounded domain $\Omega_\rho$ enclosed by
\begin{equation}
\label{eq:boundary}
 \Gamma_\rho:=\left\{\zeta+\rho(\zeta)\zeta \mid \zeta\in\S\right\}. 
\end{equation}
A domain $\Omega$ admitting a solution to \eqref{eq:serrin} will also be referred to as a solution of the problem.

In what follows, $h^{k+\alpha}(\overline{\Omega})$ denotes the little H\"older space defined as the closure of the Schwartz space $\mathcal{S}$ of rapidly decreasing functions in $C^{k+\alpha}(\overline{\Omega})$, and similarly $h^{k+\alpha}(\Gamma)$ for a hypersurface $\Gamma$ (see Lunardi \citep{lunardi2012analytic}). 

\medskip

In order to motivate our study, let us mention several related results concerning existence and stability of solutions to \eqref{eq:serrin}. 
The existence of $\Omega$ for non-constant $f$ is known (see Bianchini, Henrot and Salani \citep{bianchini2014overdetermined}) in the case where $f$ is positively homogeneous, i.e., 
\begin{equation}
\label{gamma-homogeneity}
 f(tx)=t^\gamma f(x) \quad (t>0,\ x\in\mathbb{R}^n)
\end{equation}
for $\gamma>0$ with $\gamma\neq 1$ with $f$ being H\"older continuous on $\R^n\setminus\{0\}$. 
This condition ensures the existence of a maximizer $\Omega$ of the Saint-Venant problem, and a solution $u$ to \eqref{eq:serrin-1} then satisfies $-\partial_\nu u=\lambda f$ on $\Gamma$ with a Lagrange multiplier $\lambda>0$. 
The $\gamma$-homogeneity of $f$ allows us to control $\lambda$ by considering $t\Omega:=\{tx\mid x\in\Omega\}$, and indeed $t=\lambda^{1/(1-\gamma)}$ gives a desired domain. 
However, the $0$-homogeneous case \eqref{eq:g-perturbation} cannot be treated by this variational approach, since the dichotomy of a maximizing sequence cannot be excluded in the concentration-compactness alternative. 

Most of the existing stability results in the literature for \eqref{eq:serrin}, fitted into our context by translation and dilation, take inequalities of the form
\begin{equation}
\label{eq:general_stability}
 \|\rho\|_{L^\infty(\mathcal{S}^{n-1})}\leq C\left[\frac{\partial u_{\Omega}}{\partial \nu}+R\right]_X^\tau, 
\end{equation}
where $u_{\Omega}$ is a solution to \eqref{eq:serrin-1} in $\Omega=\Omega_\rho$ with $C^{2+\alpha}$-boundary, $0<\tau\leq 1$ and $[\,\cdot\,]_X$ denotes a norm or seminorm which measures the deviation of $-\partial_\nu u_{\Omega}$ from a constant $R>0$. 
Aftalion, Busca and Reichel \citep{aftalion1999} adopted a quantitative version of the method of moving planes and obtained a logarithmic version of \eqref{eq:general_stability} with $X=C^1(\Gamma)$. 
The method was further developed by Ciraolo, Magnanini and Vespri \citep{ciraolo2016holder}, and they obtained \eqref{eq:general_stability} for some $0<\tau<1$ in terms of the Lipschitz seminorm of $X=\text{Lip}(\Gamma)$. 
In fact, these results also hold for semilinear equations $-\Delta u=f(u)$ with $u>0$. 
On the other hand, Brandolini, Nitsch, Salani and Trombetti \citep{brandolini2008stability} made use of integral identities and proved \eqref{eq:general_stability} with $X=L^\infty(\Gamma)$ for some $0<\tau<1$. 
Moreover, they obtained an estimate of the volume of the symmetric difference of $\Omega$ and a union of balls by a weaker norm, i.e., $X=L^1(\Gamma)$. 
Note that the problem \eqref{eq:serrin} admits a domain $\Omega$ composed of a finite number of balls joined by tiny tentacles if we only control the extra boundary condition \eqref{eq:serrin-2} by the $L^1$-norm. 
Following this approach, Feldman \citep{feldman2018stability} obtained the sharp estimate
\begin{equation*}
 |\Omega\triangle\mathbb{B}|\leq C\left\|\frac{\partial u_{\Omega}}{\partial\nu}+R\right\|_{L^2(\Gamma)}, 
\end{equation*}
where $|\Omega\triangle\mathbb{B}|$ is the volume of the symmetric difference of $\Omega$ and $\mathbb{B}$ and is considered as $\|\rho\|_{L^1(\S)}$ for star-shaped $\Omega=\Omega_\rho$. 
The linear (i.e., $\tau=1$) stability estimate has also been expected in \eqref{eq:general_stability}. 
Recently, Magnanini and Poggesi proved \eqref{eq:general_stability} with $X=L^2(\Gamma)$ and $\tau=1$ for $n=2$, $\tau$ arbitrarily close to $1$ for $n=3$, and $\tau=\frac{2}{n-1}$ for $n\geq 4$ \citep{magnanini2020nearly}.

In general, for overdetermined problems, the super-subsolution method based on the maximum principle provides an existence criterion. 
In our setting, a bounded domain $\Omega$ is called a supersolution to \eqref{eq:serrin} if the unique solution $u=u_\Omega$ to \eqref{eq:serrin-1} satisfies
\begin{equation*}
 -\frac{\partial u_\Omega}{\partial\nu}\leq f \quad \text{on} \ \Gamma, 
\end{equation*}
and a subsolution is defined analogously with the opposite inequality. 
The existence of a solution, i.e., a bounded domain $\Omega$ in which $u_\Omega$ satisfies \eqref{eq:serrin-2}, is guaranteed provided there are a supersolution $\Omega_{\sup}$ and a subsolution $\Omega_{\sub}$ satisfying $\Omega_{\sub}\subset\Omega_{\sup}$. 
Typically, balls $\mathbb{B}_r$ with large or small radii $r>0$ give super- or subsolutions. 
Indeed, 
for $\Omega = \B_r$,
\begin{equation*}
 u_{\mathbb{B}_r}(x)=\frac{r^2-|x|^2}{2n}
\end{equation*}
with $-\partial_{\nu} u_{\B_r}=\frac{r}{n}$ on $\partial\B_r$
solves \eqref{eq:serrin},
and
we see that, in the $\gamma$-homogeneous setting \eqref{gamma-homogeneity} with $\gamma>1$, $\B_r$ with large (resp.\ small) $r>0$ is a supersolution (resp.\ subsolution); while for $0\leq \gamma<1$, $\B_r$ with large (resp.\ small) $r>0$ is a subsolution (resp.\ supersolution). 
Hence these balls provide an appropriate pair of super- and subsolutions only if $\gamma>1$.

\medskip 

We therefore take another approach in this paper based on an implicit function theorem, yielding
linear stability estimates with H\"older norms on both sides of the estimate, as well as the existence and local uniqueness of $\Omega$ for a given perturbation $g$ in \eqref{eq:g-perturbation}. 
We will need to exploit detailed properties of the linearized equation 
\begin{subequations}
\label{eq:linearized_problem}
\begin{align}
 \begin{aligned}
 -\Delta p&=0 && \text{in} \ \Omega_{\rho_0},\\
 \left(H-\frac{1}{f}\right)p 
 + \frac{\partial p}{\partial \nu}
&=-\varphi && \text{on} \ \Gamma_{\rho_0},
\end{aligned}
&
\label{eq:linearized_problem-1}
\\
 p =f\tilde\rho \quad\, \text{on} \ \Gamma_{\rho_0},
& 
\label{eq:linearized_problem-2}
\end{align}
\end{subequations}
where $H=H_{\Gamma_{\rho_0}}$ is the mean curvature of $\Gamma_{\rho_0}$ normalized such that $H=n-1$ for $\Omega=\mathbb{B}$. 
The linearized equation \eqref{eq:linearized_problem} is derived by substituting a solution pair
$(\Omega_{\rho_0+\varepsilon\tilde\rho}, u_{\rho_0+\varepsilon\tilde\rho})$ with formal expansions
\begin{align*}
 \Gamma_{\rho_0+\varepsilon\tilde\rho}&=\{\zeta+\left(\rho_0(\zeta)+\varepsilon\tilde\rho(\zeta)\nu(\zeta)\right)+o(\varepsilon) \mid \zeta\in\S\},\\
 u_{\rho_0+\varepsilon\tilde\rho}&=u_{\rho_0}+\varepsilon p+o(\varepsilon)
\end{align*}
into \eqref{eq:serrin} for a right hand side $f+\varepsilon\varphi$, and  equating functions of order $\varepsilon$.
Note that \eqref{eq:linearized_problem} is a decoupled system for $p$ and $\tilde\rho$, and we may consider only \eqref{eq:linearized_problem-1} for the solvability of \eqref{eq:linearized_problem}. Then \eqref{eq:linearized_problem-2} with known $p$ yields a solution $\tilde\rho$.

Recall that the implicit function theorem states that the nonlinear equation $F(\rho, g)=0$ has for each $g$ close to $g_0$ a unique solution $\rho$ near $\rho_0$ with $F(\rho_0,g_0)=0$, if
\begin{enumerate}[label=(\roman*)]
	\item the mapping $F\colon X\times Y\to Z$ is $C^1$ in a neighbourhood of $(\rho_0, g_0)$ and if
	\item the partial derivative $\partial_\rho F(\rho_0,g_0)\in \L(X,Z)$ is bijective.
\end{enumerate}
Here, $X$, $Y$ and $Z$ are Banach spaces with $X\subset Z$. 
In addition to the solution $\rho(g)$ being locally unique, the mapping $g\mapsto \rho(g)\in X$ is in $C^1$. 
In the current setting, the Neumann boundary condition \eqref{eq:serrin-2} yields such a mapping $F$, and the linearized equation $\partial_\rho F(\rho_0,g_0)[\tilde{\rho}]=\varphi$ is reflected by \eqref{eq:linearized_problem}.
However, the linearized equation \eqref{eq:linearized_problem} has a regularity defect called loss of derivatives, 
i.e.\ $\partial_\rho F(\rho_0, g_0)^{-1} \not\in \L(Z, X)$. Since solutions $\tilde{\rho}$ to \eqref{eq:linearized_problem} are less regular than $\rho_0$, and hence the typical iterative scheme in the classical implicit function theorem fails.

One method to overcome this regularity issue is the Nash-Moser theorem, a generalization of the classical implicit function theorem introduced by Nash in \citep{nash1956imbedding} and generalized by Moser in \citep{moser1961new}. The introduction of a smoothing operator combined with Newton's method for improved convergence was there shown to be a mean to overcome the regularity deficit. For the Nash-Moser theorem to work, 
\begin{enumerate}[label=(\roman*)]
	\item regularity properties are required for $F\colon X_i\times Y\to Z_i$, where $(X_i,Z_i)_i$ is a family of pairs of Banach spaces such that $X_i\subset X_{i-1}$, $Z_i\subset Z_{i-1}$. Furthermore, 
	\item a (right) inverse of $\partial_\rho F(\rho,g)$ has to exist for $(\rho,g)$ in a neighbourhood of $(\rho_0,g_0)$. 
\end{enumerate}
In this setting, for every $g$ in a neighbourhood of $g_0$, the existence of $\rho(g)$ in $X_0$ is then given. Note that there are various versions of the Nash-Moser theorem, also referred to as Nash-Moser-H\"ormander theorem. We refer as an example to the work of Baldi and Haus \citep{baldi2017nash} and the references therein.

Instead of applying the Nash-Moser theorem, we introduce a new modified version of the classical implicit function theorem, which has the constraint that a loss of derivatives may take place \emph{except} at the point 
$(\rho_0,g_0)$. We require for a pair of Banach triplets 
$X_2 \subset X_1 \subset X_0$ and $Z_2 \subset Z_1 \subset Z_0$ that
\begin{enumerate}[label=(\roman*)]
	\item for $j=1,2$, $F$ is continuous in a neighbourhood of $(\rho_0,g_0)$ from $X_{j-1}\times Y$ to $Z_{j-1}$, and that
	it is in $C^1$ in a neighbourhood of $(\rho_0,g_0)$ from $X_j \times Y$ to $Z_{j-1}$, 
	For $(\rho,g)$ in a neighbourhood of $(\rho_0,g_0)\in X_j \times Y$, we have 
	$\partial_\rho F(\rho,g)\in \L(Z_{j-1}, X_{j-1})$. Further, 
\item $F\colon X_j \times Y \to Z_j$ is Fr\'echet-differentiable at $(\rho_0,g_0)$ for $j=1,2$ and $\partial_\rho F(\rho_0,g_0)\in \L(X_j, Z_j)$ is invertible for $j=0,1,2$.
\end{enumerate}
The first point reflects the loss of regularity, the second point reflects that it does not occur at the point under consideration. 
Under these assumptions, we derive a modified implicit function theorem that yields local uniqueness of a solution $\rho(g)\in X_1$ for all $g$ in a neighbourhood of $g_0$, and
the mapping $g\mapsto \rho(g)\in X_0$ is in $C^1$.
Note that in the setting of \eqref{eq:serrin} and \eqref{eq:linearized_problem}, the loss of derivatives does indeed not occur in the case of the solution of \eqref{eq:serrin} for constant $f$, as then $\Gamma$ and $u$ are smooth.

\medskip

However, a second obstacle apart from the loss of derivatives arises. 
Due to the translational invariance of \eqref{eq:serrin}, the linearized equation \eqref{eq:linearized_problem-1} for $g=0$ and $\Omega_{\rho_0}=\B$ is not solvable for arbitrary $\varphi \in h^{2+\alpha}(\S)$, and for $\varphi = 0$ it has an $n$-dimensional space of solutions. 
This implies that the partial derivative of $F$ at $(0,0)$ is not invertible, which is necessary also for the modified implicit function theorem.
We will remove this degeneracy by imposing an additional condition 
\begin{equation}
\label{eq:barycenter}
 \int_{\Omega}x_j\,dx=0 \quad (j=1,\ldots,n), 
\end{equation}
so that the barycenter of $\Omega$ is fixed to be the origin, and by decomposing the space $h^{k+\alpha}(\S)$ into $h^{k+\alpha}(\S) = X_l\oplus K$, where
\begin{align}
\label{eq:def_subspaces_Xl}
\begin{aligned}
 X_l&:=\left\{\rho \in h^{l+\alpha}(\S) \mid \langle \rho, x_j\rangle_{L^2(\S)}=0, \quad j=1,\ldots,n\right\},\\
 K&:=\spa\left\{x_1,\ldots,x_n\right\}. 
\end{aligned}
\end{align}
This allows for a decomposition of a domain perturbation $\rho\in h^{l+\alpha}(\S)$ into $\rho_1\in X_l$, $\rho_2\in K$, as well as a decomposition of the function $g$ in \eqref{eq:g-perturbation} likewise, and 
we examine $F(\rho_1+\rho_2,g_1+g_2)=0$. 

With these preparations, we present the main result of this work.

\begin{theorem}\label{thm:main}
There exist neighbourhoods of zeros 
\[
 V\subset X_2,\, U_2 \subset h^{2+\alpha}(\S)\times K \text{ and } U_3 \subset h^{3+\alpha}(\S)\times K,
\] 
such that for all $g_1\in V$ there are unique $(\rho, g_2) = (\rho(g_1), g_2(g_1))\in U_3$ 
such that 
the following holds.
\begin{enumerate}[label=(\roman*)]
\item $\Omega_{\rho(g_1)}$ defined by \eqref{eq:boundary} admits a solution $u\in h^{3+\alpha}(\overline{\Omega_{\rho(g_1)}})$ to \eqref{eq:serrin} for \eqref{eq:g-perturbation} with $g= g_1+g_2(g_1)$, and satisfies \eqref{eq:barycenter}. 
\item $\Omega_{\rho(g_1)}$ is locally unique up to translations in the sense that there is 
$(\rho(g_1), g_2(g_1))\in U_3$ for $g_1\in V$, and if $\Omega_\rho$ with $\rho\in U_3$ admits a solution $u$ to \eqref{eq:serrin} for \eqref{eq:g-perturbation} with $g=g_2(g_1)+g_1$ and satisfies \eqref{eq:barycenter}, 
then $\rho=\rho(g_1)$. 
\item For the mapping $(\rho, g_2)\colon V \to U_3$, we have $(\rho, g_2)\in C^1(V,U_2)$ 
and the stability estimates 
\begin{equation}
\label{eq:stability_ift} 
\begin{aligned}
\norm{\rho(g_1)}{h^{2+\alpha}(\S)} & \leq C\norm{g_1}{h^{2+\alpha}(\S)}, \\
\norm{g_2(g_1)}{h^{2+\alpha}(\S)} & \leq C\norm{g_1}{h^{2+\alpha}(\S)}
\end{aligned}
\end{equation}
hold.
\end{enumerate}
\end{theorem}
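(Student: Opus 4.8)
The strategy is to encode the overdetermined condition \eqref{eq:serrin-2} together with the barycenter normalisation \eqref{eq:barycenter} in a single nonlinear map and to apply the modified implicit function theorem established above. For $\rho$ in a small ball of a little H\"older space, $\Omega_\rho$ from \eqref{eq:boundary} is a bounded star-shaped domain of the corresponding regularity, so \eqref{eq:serrin-1} has a unique solution $u_\rho$; writing $\Theta_\rho(\zeta)=(1+\rho(\zeta))\zeta$ for the radial diffeomorphism $\S\to\Gamma_\rho$ and using $\Theta_\rho(\zeta)/|\Theta_\rho(\zeta)|=\zeta$, so that $f\circ\Theta_\rho=\tfrac1n+g$ is independent of $\rho$, I would set
\begin{equation*}
 F(\rho,g):=-\frac{\partial u_\rho}{\partial\nu}\circ\Theta_\rho-\frac1n-g,\qquad
 B(\rho):=\Bigl(\int_{\Omega_\rho}x_j\,dx\Bigr)_{j=1}^{n}\in K .
\end{equation*}
Then $F(\rho,g)=0$ is exactly \eqref{eq:serrin-2} for $f$ as in \eqref{eq:g-perturbation}, $B(\rho)=0$ is \eqref{eq:barycenter}, and $F(0,0)=0=B(0)$. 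Splitting $\rho=\rho_1+\rho_2$, $g=g_1+g_2$ along $h^{l+\alpha}(\S)=X_l\oplus K$, I would treat $w:=(\rho,g_2)$ as the unknown and $g_1$ as the parameter for the single equation $\mathcal F(w,g_1):=\bigl(F(\rho,g_1+g_2),\,B(\rho)\bigr)=0$.

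The first main task is to check the hypotheses of the modified implicit function theorem for $\mathcal F$. Transplanting \eqref{eq:serrin-1} to the fixed ball $\B$ by a Hanzawa-type change of variables turns it into an elliptic problem on $\B$ with coefficients depending smoothly on $\rho$, and Schauder theory in little H\"older spaces (Lunardi \citep{lunardi2012analytic}) then yields $u_\rho$ together with its dependence on $\rho$. Because the outer normal $\nu$ to $\Gamma_\rho$ costs one derivative of $\rho$, and the mean curvature $H_{\Gamma_\rho}$ that appears after linearisation costs two, the map $F$ loses one derivative for $\rho$ away from $0$; but at $\rho=0$, where $\Gamma_0=\S$ and $u_0$ are smooth, no such loss occurs, and $B$ and the $g$-dependence are smooth in every relevant topology. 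This is precisely the asymmetric regularity structure demanded by hypotheses (i)--(ii), with the Banach triplet realised by barycenter-constrained little H\"older spaces of three consecutive orders. Making these mapping and continuity properties precise is the most laborious step and the only place where the genuine loss of derivatives must be dealt with.

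The second main task is the linearisation at the base point, which supplies the invertibility required by hypothesis (ii) at every level of the triplet. Differentiating $F$ in $\rho$ reproduces the operator of \eqref{eq:linearized_problem-1}, namely $\partial_\rho F(0,0)[\tilde\rho]=L\tilde\rho$ with $L\tilde\rho=-\bigl(H-\tfrac1f\bigr)f\tilde\rho-\partial_\nu\mathcal H(f\tilde\rho)$, where $\mathcal H$ is the harmonic extension into $\B$; on $\B$ one has $H=n-1$, $f=\tfrac1n$, and expansion in spherical harmonics $Y_k$ of degree $k$ gives $LY_k=\tfrac{1-k}{n}Y_k$. Hence $L$ is elliptic of order one, its kernel and cokernel are both the $n$-dimensional space $K=\spa\{x_1,\dots,x_n\}$ of degree-one harmonics---the infinitesimal translations responsible for the degeneracy---and $L$ restricts to an isomorphism from the barycenter-orthogonal subspace $X_{l+1}$ onto $X_l$, gaining one derivative. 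Combining this with $\partial_g F(0,0)=-\id$, with $\partial_\rho B(0)[\tilde\rho]=\bigl(\langle\tilde\rho,x_j\rangle_{L^2(\S)}\bigr)_j$ (an isomorphism on $K$, zero on each $X_l$), and with $\partial_{g_2}B(0)=0$, one verifies directly that $\partial_w\mathcal F(0,0)$ is an isomorphism onto its target at each regularity level: the $K$-component of the first equation determines $\tilde g_2$, its $X_l$-component is inverted by $L|_{X_{l+1}}$, and $B'(0)$ determines the $K$-part of $\tilde\rho$.

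With both tasks done, the modified implicit function theorem applied to $\mathcal F$ produces neighbourhoods $V\subset X_2$, $U_2\subset h^{2+\alpha}(\S)\times K$ and $U_3\subset h^{3+\alpha}(\S)\times K$, and for each $g_1\in V$ a zero $(\rho(g_1),g_2(g_1))\in U_3$ that is locally unique in $U_3$, with $g_1\mapsto(\rho(g_1),g_2(g_1))$ of class $C^1$ from $V$ into $U_2$. This is parts (i) and (ii) of the theorem---with $u=u_{\rho(g_1)}\in h^{3+\alpha}(\overline{\Omega_{\rho(g_1)}})$ by Schauder regularity, and local uniqueness up to translation because \eqref{eq:barycenter} singles out one representative in each translation class---and the $C^1$-assertion of (iii). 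For the estimates \eqref{eq:stability_ift} one uses the quantitative form of the theorem: to first order $(\rho(g_1),g_2(g_1))=-\bigl(\partial_w\mathcal F(0,0)\bigr)^{-1}[(-g_1,0)]+o\bigl(\norm{g_1}{h^{2+\alpha}(\S)}\bigr)$, and the inverse found above sends $(-g_1,0)$ to $\bigl((L|_{X_3})^{-1}g_1,\,0\bigr)$ with $\norm{(L|_{X_3})^{-1}g_1}{h^{2+\alpha}(\S)}\le C\norm{g_1}{h^{2+\alpha}(\S)}$ (in fact $(L|_{X_3})^{-1}g_1\in h^{3+\alpha}(\S)$) and $g_2(g_1)=o(\norm{g_1}{h^{2+\alpha}(\S)})$, so shrinking the neighbourhood absorbs the remainder and yields both inequalities. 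The principal obstacle is the first task---showing, within the scale of little H\"older spaces, that $F$ and $\partial_\rho F$ have exactly this asymmetric regularity behaviour; the conceptual core is the identification in the second task of the $n$-dimensional degeneracy of $L$ with translations and its removal through \eqref{eq:barycenter} and the splitting \eqref{eq:def_subspaces_Xl}.
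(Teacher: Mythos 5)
Your proposal follows the same route as the paper: defining a vector-valued map that bundles the pulled-back Neumann condition and the barycenter condition, taking $(\rho,g_2)$ as unknown and $g_1\in X_2$ as parameter, verifying that the linearization at the origin is the Dirichlet-to-Neumann operator $(\mathscr N-\id)/n$ restricted to $X_{l+1}\to X_l$ plus isomorphisms on the $K$-factors, and then invoking the modified implicit function theorem on the same Banach triplets $\X_j=h^{j+2+\alpha}(\S)\times K$, $\Z_j=X_{j+1}\times\R^n\times K$. The paper writes its map $G$ with explicit projections $P_2F$ and $(\id-P_2)F$, whereas you leave $F$ unsplit and decompose only when inverting the linearization; this is cosmetic, and the sign conventions differ but cancel out. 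Your derivation of \eqref{eq:stability_ift} by the first-order Taylor expansion is the same in substance as the paper's differentiation of the identity $G(\rho(g_1),g_1+g_2(g_1))=0$.
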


While the existence of $\rho$ is guaranteed in $h^{3+\alpha}(\S)$, only the weaker norm, i.e.\ the $h^{2+\alpha}(\S)$ norm, of $\rho$ is estimated in \eqref{eq:stability_ift}.
This is due to the fact that the linear stability estimate requires $C^1$-regularity of the mapping $g_1 \mapsto \rho$, and this regularity is expected only when the image space is $h^{2+\alpha}(\S)$ due to the loss of derivatives.

\begin{remark}
The translational invariance of \eqref{eq:serrin} is mirrored in that theorem by using the decomposition into the translational part and its orthogonal complement.
In that regard, it also becomes clear why the
setting of the little H\"older spaces $h^{l+\alpha}$ instead of the H\"older spaces $C^{l+\alpha}$ is necessary:
The decomposition into subspaces is induced by the so-called spherical harmonics on $\S$. They are
dense in $h^{l+\alpha}(\S)$, but not in $C^{l+\alpha}(\S)$.
This will be further discussed in Section \ref{sect:degeneracy}.
\end{remark}

The paper is organised as follows. In Section \ref{sect:preliminaries}, we introduce the perturbed problem as well as 
derive in detail the formulation via the linearized equation \eqref{eq:linearized_problem}.
We motivate the application of an implicit function theorem to a mapping $F$ that is derived from the Neumann boundary condition. This application is obstructed by the degeneracy of the derivative of $F$ as well as the loss of derivatives. 
The degeneracy of the derivative of $F$ stemming from the inherent symmetry of \eqref{eq:serrin} will be addressed in Section \ref{sect:degeneracy}. 
There, also the decomposition for the little H\"older spaces is motivated as well as the necessity of using the setting of the little H\"older spaces.
In Section \ref{sect:modified_ift}, we will revisit the implicit function theorem and establish a modified version fitting our setting. This is then applied to the perturbed problem in Section \ref{sect:existence_stability_ift} to 
prove Theorem \ref{thm:main}.

\section{Preliminaries}
\label{sect:preliminaries}

We formally set up the perturbed problem defined in \eqref{eq:serrin}. 
We want to know whether for a perturbation
$g$ there exists an open bounded domain $\Omega=\Omega(g)$ admitting a solution $u_\Omega$ to \eqref{eq:serrin} with \eqref{eq:g-perturbation}, i.e.\ $f = \frac{1}{n} + g$.

We restrict the domain $\Omega$ to be in such a way that it may be modelled as a deviation of $\B$, the domain admitting a solution to \eqref{eq:serrin} with $f=\frac{1}{n}$. 
For this reference domain $\Omega_0 = \B$, with $\partial\Omega_0=\Gamma_0= \S$,
we define the perturbed domain $\Omega_\rho$ by its $h^{m+\alpha}$-boundary $\Gamma_\rho = \partial \Omega_\rho$ in the following way. We set for $m\in\N$
\begin{equation*}
U_{\gamma,m} := \setcond{v\in h^{m+\alpha}(\S)}{\norm{v}{h^{m+\alpha}(\S)}<\gamma}, 
\end{equation*}
with $\gamma\leq 1$ sufficiently small.
Next, we define 
\begin{equation*}
\theta:\,\S\times(-1,\infty)\to \theta\left(\S\times(-1,\infty)\right), 
\quad \theta (\zeta, r) := \zeta + r\nu_0(\zeta) = \zeta + r\zeta.
\end{equation*}
In general, $\nu_\rho$ denotes the outer unit normal vector of $\Gamma_\rho$; for $\rho = 0$ we have $\nu_0(x) = x$.
Then we set 
\begin{equation*}
\Gamma_\rho = \setcond{\zeta + \rho(\zeta)\nu_0(\zeta)\in\R^n}{\zeta\in\Gamma_0} 
= \setcond{\zeta + \rho(\zeta)\zeta\in\R^n}{\zeta\in\S},
\end{equation*}
and $\rho\in U_{\gamma,m}$. $\rho$ models the velocity of the boundary, and will be used to measure how much $\Gamma_\rho$ deviates from $\Gamma_0$.
Using this, we define the diffeomorphism 
\begin{equation*}
\theta_\rho (x) := 
\begin{cases}
x + \varphi\left(|x|-1\right)\rho\left(\frac{x}{|x|}\right)\frac{x}{|x|}, \quad & \text{for } x\neq 0,\\
0 \quad & \text{for } x=0,
\end{cases}
\end{equation*}
from $\Omega_0=\B$ to $\Omega_\rho$, 
where $\varphi \colon \R\to\R$ is a smooth cut-off function with 
$0\leq \varphi(r)\leq 1$, $\varphi(r) = 1$ for $|r| \leq \frac{1}{4}$ and $\varphi(r) = 0$ for $|r| \geq \frac{3}{4}$, as well as 
$\left|\frac{d\varphi}{dr}(r)\right|\leq 4$. 
The diffeomorphism $\theta_\rho$ induces pullback and pushforward operators
\begin{equation*}
\begin{aligned}
\theta_\rho^* u & := u \circ \theta_\rho, \quad & \theta_\rho^*\colon h^{k+\alpha}(\Gamma_\rho)\to h^{k+\alpha}(\S), \\
\theta_*^\rho v & := v \circ \theta_\rho^{-1}, \quad & \theta_*^\rho\colon h^{k+\alpha}(\S)\to h^{k+\alpha}(\Gamma_\rho),
\end{aligned}
\end{equation*}
with $k\in\N\cup \{0\}$. 

Our problem now becomes the following: 
\begin{problem}
\label{problem:eins}
For $g\in h^{1+\alpha}(\S)$, is there a $\rho = \rho(g)\in U_{\gamma,2}$ such that $\Omega_\rho$ as defined above admits a solution $u_\rho$ to \eqref{eq:serrin_perturbed_rho}?
\begin{subequations}
\label{eq:serrin_perturbed_rho}
\begin{align}
\begin{aligned}
 -\Delta u_\rho			& = 1 \quad &\text{in }\Omega_\rho, \\
	  u_\rho  				& = 0 \phantom{+ g\left(\frac{x}{|x|}\right)} \quad &\text{on }\Gamma_\rho, 
\end{aligned}
&
\label{eq:serrin_perturbed_rho-1}
\\
\frac{\partial u_\rho}{\partial \nu_\rho}(x)  = \frac{1}{n} + g\left(\frac{x}{|x|}\right) \quad \text{on } \Gamma_\rho.&
\label{eq:serrin_perturbed_rho-2}
\end{align}
\end{subequations}
\end{problem}
By elliptic regularity theory, we have, for given $\rho \in U_{\gamma,2}$, the existence and uniqueness of 
a solution $u_\rho\in h^{2+\alpha}(\overline\Omega_\rho)$ when only considering \eqref{eq:serrin_perturbed_rho-1}. 
Therefore, for the examination of this problem, it is sufficient to focus on the perturbation, i.e.\ \eqref{eq:serrin_perturbed_rho-2}.

We define $F\in C(U_{\gamma,2}\times h^{1+\alpha}(\S), h^{1 + \alpha}(\S))$ by
\begin{equation}
\label{eq:function_f}
F(\rho, g) := \theta_\rho^* \left( \frac{\partial u_\rho}{\partial \nu_\rho}\right)  +\frac{1}{n} + g,
\end{equation}
where $u_\rho$ is the unique solution of \eqref{eq:serrin_perturbed_rho-1}. 
Then $\Omega_\rho$ admits a solution to \eqref{eq:serrin_perturbed_rho} for given $g\in h^{1+\alpha}(\S)$
if and only if $F(\rho, g) = 0$.

This structure tempts to use the implicit function theorem to arrive at solutions in a neighbourhood of $(0,0)$. However, we shall arrive at two obstacles. The first is the derivative $\partial_\rho F(0,0)$ not being bijective due to the inherent translational invariance, an observation that will be treated in Section \ref{sect:degeneracy}. The second is the loss of derivatives, a regularity issue of the $\rho$-derivative of $F$ that will be discussed in the following.

In view of this, note that for $g\in h^{2+\alpha}(\S)$ and for $m=2,3$, we have
\begin{equation}
\label{eq:regularity_function_f_general} 
F\in C(U_{\gamma,m}\times h^{2+\alpha}(\S), h^{m-1 + \alpha}(\S)).
\end{equation}

\subsection{Derivative of \texorpdfstring{$F$}{F}} 

We turn to the $\rho$-differentiability of F at a point $(\rho_0,g)$. 
Due to the loss of derivatives, we need to assume $\rho_0 \in U_{\gamma,3}$. 
We consider
\[
F(\rho_0 + \varepsilon \tilde \rho, g) - F(\rho_0, g) = A(\rho_0, g)[\varepsilon \tilde\rho] + o(\varepsilon) 
\]
for $\tilde\rho\in U_{\gamma,3} $ and $\varepsilon\to 0$.
Since $u_\rho$ in $F(\rho,g)$ lives on $\overline\Omega_\rho$, which varies for $\rho$, we consider the following approach.

Let $x\in \overline\Omega_0$.
We define the mapping $u(\rho, x) := u_{\rho}(\theta_{\rho}(x))$ 
and $y = \theta_{\rho_0}(x) \in \overline\Omega_{\rho_0}$, as well as 
$z = \theta_{\rho_0 + \varepsilon\tilde\rho}(x) \in \overline\Omega_{\rho_0+\varepsilon\tilde\rho}$.
One may show that $u(\rho,\theta_{\rho}(x))$ is differentiable with respect to $\rho$, for the procedure see e.g.\ \citep[Sect.\ 5.6]{henrot2018shape}.
Therefore, the following calculations are well-defined.

Using the Taylor expansion, we write  
\begin{align*}
u_{\rho_0+\varepsilon\tilde\rho}(z) & = u(\rho_0 + \varepsilon\tilde\rho, \theta_{\rho_0 + \varepsilon\tilde\rho}(x)) \\
& = u(\rho_0,y) + \underbrace{\partial_\rho u(\rho_0,y)\tilde\rho \left(\frac{\theta_{\rho_0}^{-1}(y)}{|\theta_{\rho_0}^{-1}(y)|}\right)}_{=: p(y)}\varepsilon \\
& \quad 
+ \partial_y u(\rho_0,y)V_{\rho_0}(y) \nu_0 \left(\frac{\theta_{\rho_0}^{-1}(y)}{|\theta_{\rho_0}^{-1}(y)|}\right)\tilde\rho \left(\frac{\theta_{\rho_0}^{-1}(y)}{|\theta_{\rho_0}^{-1}(y)|}\right)\varepsilon + o(\varepsilon)
\end{align*}
with $V_{\rho_0}(y) := \varphi\left(\left|\theta_{\rho_0}^{-1}(y)\right| -1\right)$. 
The function $p$ is the so-called shape derivative of $u_{\rho_0}$ with respect to the domain variation from $\Omega_{\rho_0}$ to $\Omega_{\rho_0+\varepsilon\tilde\rho}$. 

Next, we reformulate problem \eqref{eq:serrin_perturbed_rho} in terms of $\tilde\rho$ and $p$. 
Using the representation as before and considering the Dirichlet problem \eqref{eq:serrin_perturbed_rho-1} 
for $u_{\rho_0+\varepsilon\tilde\rho}$ as well as for $u_{\rho_0}$, and letting $\varepsilon \to 0$, we get 
\begin{align}
\label{eq:dirichlet_p}
\begin{aligned}
\Delta_y p(y)& =0 & \quad \text{in } \Omega_{\rho_0}, \\
p(y) & = - \frac{\partial u(\rho_0,y)}{\partial \nu_{\rho_0}} \left(\theta_*^{\rho_0}\tilde\rho\right)(y) \frac{1}{|\nabla N_{\rho_0}|}
& \quad \text{on } \Gamma_{\rho_0}.
\end{aligned}
\end{align}
Here, $N_\rho(x) := |x|-1 - \rho(\frac{x}{|x|})$ is defined for $x\in\theta(\S\times(-1,\infty))$
and we note that $\Gamma_\rho $ is the zero-level set of $N_\rho$. 
Therefore, for the outer unit normal vector field $\nu_{\rho_0}$ at $\Gamma_{\rho_0}$, we have 
$\nu_{\rho_0}(x) = \frac{\nabla N_{\rho_0}(x)}{|\nabla N_{\rho_0}(x)|}$ and  
$\nabla N_{\rho_0}\left(\theta_{\rho_0}(\zeta)\right). \nabla N_0(\zeta) = 1$ for $\zeta\in \S$.
We also note that  
\begin{equation*}
\theta_*^{\rho_0}(\nu_0) = (\nu_{\rho_0}.\nu_0)\nu_{\rho_0} + \tau_{\rho_0} 
= \frac{1}{|\nabla N_{\rho_0}|}\nu_{\rho_0} + \tau_{\rho_0},
\end{equation*}
where $\tau_{\rho_0}$ is a tangent vector field. Here, we used 
$(\nu_{\rho_0}.\nu_0) = \frac{1}{\left|\nabla N_{\rho_0}\left(\theta_{\rho_0}(\zeta)\right)\right|}$. 

\begin{remark}
The regularity of $u_{\rho_0}$ and $\nu_{\rho_0}$, $\rho_0\in U_{\gamma,3}$, imposes restrictions on the regularity of $p$ and in general, we can only expect $p\in h^{2+\alpha}(\Gamma_{\rho_0})$. 
We have $\partial_{\nu_{\rho_0}}{ u_{\rho_0}} \in h^{2+\alpha}(\Gamma_{\rho_0})$ 
and for the mean curvature $H_{\rho_0}\in h^{1+\alpha}(\Gamma_{\rho_0})$, but in general, no more. If, however, we are in the setting for $\rho_0=0$, then we have $\partial_{\nu_{0}}{u_{0}} \in C^{\infty}(\S)$, 
$-\partial_{\nu_{0}}{u_{0}} = \frac{1}{n}$ and $p = \frac{1}{n}\tilde\rho$, 
thus $p\in h^{3+\alpha}(\S)$, the same regularity as $\tilde\rho$. 
\end{remark}

Now we calculate the Fr\'echet-derivative of $F$. With the notation as before, let 
$x\in \S$, $y = \theta_{\rho_0}(x) \in \Gamma_{\rho_0}$ and 
$z = \theta_{\rho_0 + \varepsilon\tilde\rho}(x) \in \Gamma_{\rho_0+\rho}$. Note that in this case, $V_{\rho_0}(y)=1$ and $\frac{\theta_{\rho_0}^{-1}(y)}{|\theta_{\rho_0}^{-1}(y)|}=x$. 

First note that there exists a tangent vector $\tau$ at $y \in \Gamma_{\rho_0}$ such that 
\[
\nu_{\rho_0 + \varepsilon\tilde\rho} (z) = \nu_{\rho_0}(y) + \varepsilon\tau(y) + o(\varepsilon).
\]
Next, we calculate 
\begin{align*}
\partial_{z_i} u_{\rho_0 + \varepsilon\tilde\rho}(z) & = \partial_{z_i} u({\rho_0 + \varepsilon\tilde\rho}, \theta_{\rho_0 + \varepsilon\tilde\rho}(x)) \\
& = \frac{\partial}{\partial y_i}u(\rho_0,y) + \varepsilon \frac{\partial}{\partial y_i} p(y)
+ \varepsilon \frac{\partial}{\partial y_i}\frac{\partial}{\partial y_k} \left(u(\rho_0,y)\right) \theta_*^{\rho_0}\left(\nu_0\tilde\rho\right)(y) 
+ o(\varepsilon) .
\end{align*}
This implies 
\begin{align*}
& \quad \nabla_z u_{\rho_0 + \varepsilon\tilde\rho}(z). \nu_{\rho_0 + \varepsilon\tilde\rho} (z) 
 = \partial_{z_i} u_{\rho_0 + \varepsilon\tilde\rho}(z)\nu_{\rho_0 + \varepsilon\tilde\rho}^i (z) \\
& = \left[ \frac{\partial}{\partial y_i}u(\rho_0,y) + \varepsilon \frac{\partial}{\partial y_i} p(y)
+ \varepsilon \frac{\partial}{\partial y_i}\frac{\partial}{\partial y_k} \left(u(\rho_0,y)\right) \theta_*^{\rho_0}\left(\nu_0\tilde\rho\right)(y)  \right] 
\left[ \nu_{\rho_0}(y) + \varepsilon\tau(y)\right]^i
+ o(\varepsilon) 
\\ 
& = \frac{\partial}{\partial y_i}u(\rho_0,y)\nu_{\rho_0}^i(y) 
+ \varepsilon\frac{\partial}{\partial y_i}u(\rho_0,y)\tau_{\rho_0}^i(y)
 + \varepsilon \frac{\partial}{\partial y_i} p(y)\nu_{\rho_0}^i(y)  \\
& \quad + \varepsilon \frac{\partial}{\partial y_i}\frac{\partial}{\partial y_k} \left(u(\rho_0,y)\right) \theta_*^{\rho_0}\left(\tilde\rho\right)(y)\left[ \frac{1}{|\nabla N_{\rho_0}|}\nu_{\rho_0} + \tau_{\rho_0}\right]\nu_{\rho_0}^i(y) 
+ o(\varepsilon) 
\\ 
& = \frac{\partial}{\partial \nu_{\rho_0}}u(\rho_0,y) + \varepsilon \frac{\partial}{\partial \nu_{\rho_0}} p(y) 
+ \varepsilon \frac{\partial^2}{\partial \nu_{\rho_0}^2}u(\rho_0,y)\theta_*^{\rho_0}\left(\tilde\rho\right)(y)\frac{1}{|\nabla N_{\rho_0}|} \\
& \quad
+ \varepsilon \frac{\partial}{\partial \tau_{\rho_0}}\frac{\partial}{\partial \nu_{\rho_0}}u(\rho_0,y)\theta_*^{\rho_0}\left(\tilde\rho\right)(y)
+ o(\varepsilon) 
\\
& = \frac{\partial}{\partial \nu_{\rho_0}}u(\rho_0,y) + \varepsilon \frac{\partial}{\partial \nu_{\rho_0}} p(y) 
+ \varepsilon \left( -1 - H_{\rho_0}\frac{\partial}{\partial \nu_{\rho_0}}u(\rho_0,y)\right)
\theta_*^{\rho_0}\left(\tilde\rho\right)(y)\frac{1}{|\nabla N_{\rho_0}|} \\
& \quad
+ \varepsilon \frac{\partial}{\partial \tau_{\rho_0}}\frac{\partial}{\partial \nu_{\rho_0}}u(\rho_0,y)\theta_*^{\rho_0}\left(\tilde\rho\right)(y)
+ o(\varepsilon), 
\end{align*}
where in the last step we used the identity
$
\Delta u_{\rho_0} = \Delta_{\Gamma_{\rho_0}} u_{\rho_0} + \partial^2_{\nu_{\rho_0}} u_{\rho_0} 
+ H_{\rho_0}\partial_{\nu_{\rho_0}} u_{\rho_0},
$
with $\Delta_{\Gamma_{\rho_0}}$ being the Laplace-Beltrami operator and $H_{\rho_0}$ the mean curvature on $\Gamma_{\rho_0}$.
Using the Dirichlet boundary condition for $p$ in \eqref{eq:dirichlet_p}, we arrive at 
\begin{align*}
F(\rho_0 + \varepsilon\tilde\rho, g) & = F(\rho_0, g) 
+ \varepsilon \theta_{\rho_0}^*\left[
\frac{\partial}{\partial \nu_{\rho_0}} p + H_{\rho_0} p - \frac{\theta_*^{\rho_0}\tilde\rho}{|\nabla N_{\rho_0}|} + \frac{\partial}{\partial \tau_{\rho_0}}\frac{\partial}{\partial \nu_{\rho_0}}u(\rho_0,y)\theta_*^{\rho_0}\tilde\rho
\right] + o(\varepsilon).
\end{align*}
We see that the term 
\[
\theta_{\rho_0}^*\left[
\frac{\partial}{\partial \nu_{\rho_0}} p + H_{\rho_0} p - \frac{\theta_*^{\rho_0}\tilde\rho}{|\nabla N_{\rho_0}|} + \frac{\partial}{\partial \tau_{\rho_0}}\frac{\partial}{\partial \nu_{\rho_0}}u(\rho_0,y)\theta_*^{\rho_0}\tilde\rho
\right]
\]
is well-defined and lies in $h^{1+\alpha}(\S)$ even when only assuming $\tilde\rho\in h^{2+\alpha}(\S)$. Note that to verify this, one also needs to take into account the impact of the regularity assumption of $\tilde\rho$ on the regularity of the solution $p$ of \eqref{eq:dirichlet_p} as mentioned in Remark \ref{rem:regularity_f}. 
This gives us the following lemma. 

\begin{lemma}
The mapping $F$ as defined in \eqref{eq:function_f} satisfies 
	\begin{equation}
	\label{rem:regularity_f}
	F \in C(U_{\gamma,2}\times h^{1+\alpha}(\S), h^{1 + \alpha}(\S)) \cap C^1(U_{\gamma,3}\times h^{1+\alpha}(\S), h^{1 + \alpha}(\S)).
	\end{equation}
Furthermore, we have the following.
\begin{enumerate}
	\item 
		The $\rho$-Fr\'echet-derivative of $F$ at $(\rho_0, g)\in U_{\gamma,3}\times h^{1+\alpha}(\S)$ is 
	\begin{align*}
	\begin{aligned}
	& \partial_\rho F(\rho_0, g) \left[\tilde\rho\right] \\
	&  = \theta_\rho^* \left[
				\frac{\partial p}{\partial\nu_{\rho_0}} + H_{\rho_0} p - \frac{\theta_*^{\rho_0}\tilde\rho}{|\nabla N_{\rho_0}|}
			+ \frac{\partial}{\partial\tau_{\rho_0}} \left(\frac{\partial u_{\rho_0}}{\partial\nu_{\rho_0}}\right)
						\theta_*^{\rho_0}\tilde\rho
						\right] \in h^{1+\alpha}(\S),
	\end{aligned}
	\end{align*}
	where $p$ is a unique solution to \eqref{eq:dirichlet_p}, i.e.
	\begin{align*}
	\begin{aligned}
	\Delta p& =0 & \quad \text{in } \Omega_{\rho_0}, \\
	p & = - \frac{\partial u_{\rho_0}}{\partial \nu_{\rho_0}} \theta_*^{\rho_0}\tilde\rho \frac{1}{|\nabla N_{\rho_0}|}
	& \quad \text{on } \Gamma_{\rho_0}.
	\end{aligned}
	\end{align*}
	\item We have
	\begin{align*}
	\partial_\rho F(0,0)		& \in \mathcal L(h^{3+\alpha}(\S), h^{2+\alpha}(\S)) \text{ and }\\
	\partial_\rho F(\rho, g)& \in \mathcal L(h^{3+\alpha}(\S), h^{1+\alpha}(\S)).
	\end{align*}
	\item
	The linear operator $\partial_\rho F(\rho, g)$ has the extension 
	\begin{equation*}
	\partial_\rho F(\rho, g)\in \mathcal L(h^{2+\alpha}(\S), h^{1+\alpha}(\S)).
	\end{equation*}
\end{enumerate}
\end{lemma}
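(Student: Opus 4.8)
\emph{Sketch of proof.} The statement amounts to making rigorous the formal computation carried out above, and I would organise it into three stages: continuity of $F$, Fréchet differentiability with the displayed derivative formula, and the mapping properties of the linearised operator in (2)--(3). First I would transfer the Dirichlet problem \eqref{eq:serrin_perturbed_rho-1} to the fixed reference domain $\Omega_0=\B$. Setting $v(\rho):=\theta_\rho^* u_\rho$, the function $v(\rho)$ solves a uniformly elliptic problem $\mathcal A(\rho)v=b(\rho)$ in $\Omega_0$ with $v|_{\Gamma_0}=0$, whose principal coefficients involve $D\theta_\rho$ (hence one derivative of $\rho$) and depend smoothly on $\rho$ in the little Hölder scale, while the lower-order coefficients and the right-hand side are controlled likewise. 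Schauder estimates in little Hölder spaces together with the invertibility of the Dirichlet Laplacian --- either a Neumann-series perturbation of $-\Delta$, or the classical implicit function theorem applied to $(\rho,v)\mapsto(\mathcal A(\rho)v-b(\rho),\,v|_{\Gamma_0})$, whose $v$-differential is an isomorphism $\{w\in h^{m+\alpha}(\overline{\Omega_0}):w|_{\Gamma_0}=0\}\to h^{m-1+\alpha}(\overline{\Omega_0})$ by elliptic theory for $m=2,3$ --- then give that $\rho\mapsto v(\rho)$ is continuous from $U_{\gamma,2}$ into $h^{2+\alpha}(\overline{\Omega_0})$ and of class $C^1$ from $U_{\gamma,3}$ into $h^{3+\alpha}(\overline{\Omega_0})$. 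Pulling back and using that $\theta_\rho$, $\nu_\rho=\nabla N_\rho/|\nabla N_\rho|$ and $|\nabla N_\rho|$ depend continuously on $\rho$, that $\theta_\rho^*\colon h^{1+\alpha}(\Gamma_\rho)\to h^{1+\alpha}(\S)$ is bounded, and that $h^{1+\alpha}$ is a Banach algebra, one obtains the continuity asserted in \eqref{rem:regularity_f}; differentiating the same building blocks (pullback, restriction to $\Gamma_{\rho_0}$, normal derivative, multiplication) via the chain and product rules identifies $\partial_\rho F(\rho_0,g)$ with the formal limit above --- the remainders in the Taylor expansion being $o(\varepsilon)$ in $h^{1+\alpha}(\S)$ by Taylor's formula with integral remainder --- and shows that the shape derivative $p$ solves \eqref{eq:dirichlet_p}. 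This gives (1) together with the $C^1$-part of \eqref{rem:regularity_f}.

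For (3) --- and, by restriction, the second assertion of (2) --- I would inspect the derivative formula term by term. For $\rho_0\in U_{\gamma,3}$ one has $u_{\rho_0}\in h^{3+\alpha}(\overline{\Omega_{\rho_0}})$, $\partial_{\nu_{\rho_0}}u_{\rho_0}\in h^{2+\alpha}(\Gamma_{\rho_0})$, $\nu_{\rho_0}$ and $1/|\nabla N_{\rho_0}|$ in $h^{2+\alpha}(\Gamma_{\rho_0})$, and $H_{\rho_0}\in h^{1+\alpha}(\Gamma_{\rho_0})$ (cf.\ the remark above). Hence the boundary datum $-\partial_{\nu_{\rho_0}}u_{\rho_0}\,\theta_*^{\rho_0}\tilde\rho/|\nabla N_{\rho_0}|$ for $p$ is a product of $h^{2+\alpha}(\Gamma_{\rho_0})$-functions already when merely $\tilde\rho\in h^{2+\alpha}(\S)$, so by Schauder $p\in h^{2+\alpha}(\overline{\Omega_{\rho_0}})$ and $\partial_{\nu_{\rho_0}}p\in h^{1+\alpha}(\Gamma_{\rho_0})$, while $H_{\rho_0}p$, $\theta_*^{\rho_0}\tilde\rho/|\nabla N_{\rho_0}|$ and $\partial_{\tau_{\rho_0}}(\partial_{\nu_{\rho_0}}u_{\rho_0})\,\theta_*^{\rho_0}\tilde\rho$ are products in $h^{1+\alpha}(\Gamma_{\rho_0})$; after the pullback $\theta_{\rho_0}^*$ this shows $\partial_\rho F(\rho,g)\in\mathcal L(h^{2+\alpha}(\S),h^{1+\alpha}(\S))$, whose restriction gives $\partial_\rho F(\rho,g)\in\mathcal L(h^{3+\alpha}(\S),h^{1+\alpha}(\S))$. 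For the first assertion of (2) I would specialise to $\rho_0=0$, $g=0$, where $\theta_0=\mathrm{id}$, $|\nabla N_0|\equiv1$, $H_0\equiv n-1$ and $u_0=\tfrac{1}{2n}(1-|x|^2)$ with $\partial_{\nu_0}u_0\equiv-\tfrac1n$ constant, so the tangential term drops out, $p$ is the harmonic extension of $\tfrac1n\tilde\rho$, and $\partial_\rho F(0,0)[\tilde\rho]=\partial_{\nu_0}p+(n-1)p-\tilde\rho$ on $\S$; for $\tilde\rho\in h^{3+\alpha}(\S)$, Schauder on the smooth sphere yields $p\in h^{3+\alpha}(\overline{\B})$, hence $\partial_{\nu_0}p\in h^{2+\alpha}(\S)$, and the remaining two terms lie in $h^{3+\alpha}(\S)\subset h^{2+\alpha}(\S)$, so $\partial_\rho F(0,0)\in\mathcal L(h^{3+\alpha}(\S),h^{2+\alpha}(\S))$.

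The hard part is the first stage: verifying, with sharp little Hölder bookkeeping, that the transfer to the fixed domain yields an elliptic problem whose coefficients lose exactly one derivative relative to $\rho$ and depend smoothly on it --- so that the classical implicit function theorem is applicable at $\rho_0\in U_{\gamma,3}$ while mere continuity already holds on $U_{\gamma,2}$ --- and that the function $p$ arising in the Taylor expansion is precisely the solution of \eqref{eq:dirichlet_p}, with the $o(\varepsilon)$-control in $h^{1+\alpha}(\S)$. It is here that the little Hölder (rather than ordinary Hölder) setting enters, keeping the Schauder estimates and the approximation underlying the $C^1$-statement functioning cleanly; once this shape-derivative machinery is set up, (1)--(3) are essentially bookkeeping with Schauder estimates and the Banach-algebra property of $h^{k+\alpha}$.
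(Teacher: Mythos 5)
Your proposal is correct and follows essentially the same route as the paper: the paper's (implicit) proof of this lemma is precisely the explicit Taylor expansion of $u_{\rho_0+\varepsilon\tilde\rho}(\theta_{\rho_0+\varepsilon\tilde\rho}(x))$ carried out in the text preceding the lemma, with the fixed-domain transfer and regularity of $\rho\mapsto\theta_\rho^*u_\rho$ delegated to Henrot--Pierre, which is exactly what you reconstruct via $\mathcal A(\rho)v=b(\rho)$, Schauder theory and the classical implicit function theorem. Your term-by-term regularity bookkeeping for (2)--(3), including the specialisation to $\rho_0=0$ where the tangential term vanishes and $\partial_\rho F(0,0)[\tilde\rho]=\tfrac1n(\mathscr N-\mathrm{Id})\tilde\rho$, matches the paper's conclusions.
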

In view of \eqref{eq:regularity_function_f_general}, one may verify that for $m=2,3$ and $g\in h^{2+\alpha}(\S)$, 
one has
\begin{equation*}
F\in C(U_{\gamma,m}\times h^{2+\alpha}(\S), h^{m-1 + \alpha}(\S)) \cap 
C^1(U_{\gamma,m+1}\times h^{2+\alpha}(\S), h^{m-1 + \alpha}(\S)). 
\end{equation*}

\begin{remark}
We have the following characterisation of bijectivity of the $\rho$-Fr\'echet-deriva-tive of $F$ at a point $(\rho, g)$ with $F(\rho, g) = 0$, i.e.\ when $\Omega_\rho$ is a solution to \eqref{eq:serrin_perturbed_rho} for 
$g\in h^{1+\alpha}(\S)$:

The extended operator $\partial_\rho F(\rho, g)$, with
\[
\partial_\rho F \in C\left(U_{\gamma,3}\times{h^{1+\alpha}(\S)}, \mathcal L\left(h^{2+\alpha}(\S), h^{1+\alpha}(\S)\right)\right),
\]
has the bounded inverse 
\begin{equation*}
\partial_\rho F(\rho, g)^{-1} \in \mathcal L\left(h^{1+\alpha}(\S), h^{2+\alpha}(\S)\right)
\end{equation*}
if and only if the boundary problem 
\begin{align}
\label{eq:system_p_varphi}
\begin{aligned}
-\Delta p & = 0 &\text{ in }\Omega_\rho \\
\left( H_\rho - \frac{1}{\frac{1}{n} + \theta_*^\rho g}\right) p + \frac{\partial p}{\partial \nu_\rho} 
		& = - \varphi & \text{ on }\Gamma_\rho.
\end{aligned}
\end{align} 
is uniquely solvable for any $\varphi\in h^{1+\alpha}(\Gamma_\rho)$.
Unique solvability of \eqref{eq:system_p_varphi} is given provided that 
$ \left( H_\rho - \frac{1}{\frac{1}{n} + \theta_*^\rho g }\right) > 0,$
see \citep[Thm.\ 6.31]{gilbarg2001elliptic}, 
in which case we would have 
$\norm{p}{h^{2+\alpha}(\overline\Omega_\rho)} \leq C \norm{\varphi}{h^{1+\alpha}(\Gamma_\rho)}$. 
This does not hold in the current setting. Thus, we have to examine the bijectivity in a different manner. 
\end{remark}

\section{Degeneracy of \texorpdfstring{$\partial_\rho F$}{derivative of F}}
\label{sect:degeneracy}

\subsection{Non-Bijectivity of the partial derivative of \texorpdfstring{$F$}{F}}

We examine the $\rho$-derivative of $F$ at $(\rho, g) = (0,0)$, as we merely require the existence of an inverse of $\partial_\rho F(0,0)$ to use the modified implicit function theorem, Theorem \ref{thm:ift_part2}. We have 
\begin{equation*}
\partial_\rho F(0,0)[\tilde\rho] 
 = -\frac{1}{n}\tilde\rho + \frac{1}{n}\mathscr N\tilde\rho, \quad \text{for }\tilde\rho\in h^{3+\alpha}(\S).
\end{equation*}
Here, $\mathscr N$ denotes the Dirichlet-to-Neumann operator on the sphere $\S$.

\begin{definition}
Let $\varphi\in h^{l+\alpha}(\S)$, $l\geq 2$ arbitrary. The Dirichlet-to-Neumann operator on the sphere 
$\mathscr N\colon h^{l+\alpha}(\S) \to h^{l-1+\alpha}(\S)$ is defined as
\[
\mathscr N \varphi := \partial_\nu u, \text{ with } u \text{ the unique solution of } 
\begin{cases}
-\Delta u & =  0 \quad \text{in } \B, \\
\hfil u & =  \varphi \quad \text{on }\S.
\end{cases}
\]
\end{definition}
We see that 
\begin{equation*}
\mathscr N \in \L\left(h^{l+\alpha}(\S), h^{l-1+\alpha}(\S) \right),\,l\in\N.
\end{equation*}

It suffices to test bijectivity of $\partial_\rho F(0,0)$ for $\tilde\rho\in H_k$, $k\in \N\cup\{0\}$, 
where 
\[
H_k = \spa\setcond{h_{k,j}}{ j=1,\ldots,d_k^n}, \quad d_k^n<\infty,
\]
is the set of harmonic homogeneous polynomials on the unit sphere of degree $k$. 
Indeed, the $h_{k,j}$, $k\in \N\cup\{0\},\, j=1,\ldots,d_k^n$, 
form an orthonormal basis of $L^2(\S)$, see e.g. \citep[Thm.\ 2.53]{folland1995introduction}. 
One may show that 
\[
\mathscr H:=\spa \setcond{ h_{k,j}}{k\in \N\cup\{0\},\, j=1,\ldots,d_k^n}
\] 
is dense in $h^{l+\alpha}(\S)$, which is why it is sufficient to consider $\tilde\rho\in \mathscr H$.

Therefore, let $\tilde\rho = h_{k,j}$ be a harmonic homogeneous polynomial on the unit sphere of order 
$k\in \N\cup\{0\}$, with $j\in\{1,\ldots,d_k^n\}$. 
We get
\begin{equation*}
\partial_\rho F(0,0)[h_{k,j}] = - \frac{1}{n}h_{k,j} + \frac{k}{n}h_{k,j} = \frac{k-1}{n}h_{k,j} 
\begin{cases}
 = 0 & \text{if }k=1, \\
\neq 0 & \text{else}.
\end{cases}
\end{equation*}
This shows that $\partial_\rho F(0,0)$ is not bijective and that its kernel is 
\begin{equation}
\label{eq:derivative_f_kernel}
\text{ker}\left(\partial_\rho F(0,0)\right) = 
\spa \setcond{h_{1,j}}{j= 1,\ldots, n},
\end{equation}
with $\dim(\text{ker}(\partial_\rho F(0,0))) = n < \infty$. 
Furthermore, we see that the range of $\partial_\rho F(0,0)$ is 
\[
 \text{range}\left(\partial_\rho F(0,0)\right) = 
\overline{\spa \setcond{h_{k,j}}{k\in\N_{\geq 2}\cup \{0\}, j= 1,\ldots, d_k^n}}^{\norm{\cdot}{h^{2+\alpha}(\S)}}.
\]

\begin{notation}
In view of the calculations to come, we define 
\begin{equation*}
X_l = \overline{\spa \setcond{h_{k,j}}{k\in\N_{\geq 2}\cup \{0\}, j= 1,\ldots, d_k^n}}^{\norm{\cdot}{h^{l+\alpha}(\S)}}, \text{ for }l\in\N.
\end{equation*}
Note that this is equivalent to defining 
\[
X_l := \setcond{\rho\in h^{l+\alpha}(\S)}{\langle{\rho},h_{1,j}\rangle_{L^2(\S)}=0,\,j=1,\ldots,n},
\]
as in \eqref{eq:def_subspaces_Xl}. 
To confirm this, also note that $ h_{1,j}(x) = \omega_n^{-\frac{1}{2}} x_j$, $j=1,\ldots,n$.
Since $X_l$ is a finite-codimensional subspace of $h^{l+\alpha}(\S)$, we have $X_l \oplus X_l^\bot = h^{l+\alpha}(\S)$, 
where $X_l^\bot$ denotes the orthogonal complement of $X_l$. Because $\dim(X_l^\bot)=n<\infty$ for all $l\in\N$, we do not need to differentiate between those spaces depending on $l$, 
as we have $X_l^\bot\cong \R^n$. Therefore, we may define 
\[
K:= \spa \setcond{h_{1,j}}{j= 1,\ldots, n}
\]
and get $X_l \oplus K = h^{l+\alpha}(\S)$.
Finally, we denote 
\[
\U_{l} := \setcond{\rho\in U_{\gamma,l}}{\langle{\rho},{h_{1,j}}\rangle_{L^2(\S)}=0,\,j=1,\ldots,n},
\]
as well as $\U_l^\bot$ for the subset of $K$ such that $U_{\gamma,l} = \U_l \oplus \U_l^\bot$.
\end{notation}

\begin{remark}
That the kernel of $\partial_\rho F(0,0)$ is non-trivial is not surprising, in fact it is an obvious property resulting from the translational invariance of \eqref{eq:serrin_perturbed_rho} with $g=0$. 
The overdetermined problem is in this case solvable for any translated sphere $\B + c := B_1(c)$, with $c\in\R^n$, and with solution $ u_c(x) = \frac{1}{2n}\left(1 - |x-c|^2\right)$. 

To show the connection to the kernel of $\partial_\rho F(0,0)$, we find $\rho \in U_{\gamma,3}$ such that $\Gamma_\rho = \partial\left(\B+c\right)$. With the ansatz 
$
x + \rho(x)x = y + c,\ x,y\in\S, 
$
we arrive at 
$\rho(x) = \rho(c,x) = x.c - 1 \pm \sqrt{1 - |c|^2 + (x.c)^2}$ for any $|c|\leq 1$. For $c=te_j$,
we then arrive at 
\begin{equation*}
\frac{d}{dt}\rho(te_j, x)\big|_{t=0}
= \left[x_j + \frac{1}{2\sqrt{1 - t^2 + t^2x_j^2}} \left(-2t + 2tx_j^2\right)\right]_{t=0} 
= x_j.
\end{equation*}
Now, because of the translational invariance of \eqref{eq:serrin_perturbed_rho}, we have $F(\rho(c,x),0) = F(0,0) = 0$,
which implies  
\[
\partial_\rho F(0,0)\left[ \frac{d}{dt}\left( \rho(te_j,\cdot)\right)\big|_{t=0} \right] = 0
\]
and we arrive at $\partial\rho F(0,0)[x_j] = 0$ for $j\in\{1,\ldots,n\}$. This coincides with \eqref{eq:derivative_f_kernel}.
\end{remark}

\subsection{Re-formulation to eliminate degeneracy}

To eliminate the problem of non-bijectivity, i.e.\ the degeneracy of the problem \eqref{eq:system_p_varphi}, we need to eliminate the translation invariance in the original problem \eqref{eq:serrin_perturbed_rho}.
Therefore, we replace $F$ as defined in \eqref{eq:function_f} by
a mapping 
\[
G \in C(\U_{2} \times \U_{2}^\bot \times X_1\times K, X_1\times \R^n\times K),
\]

\begin{align*}
\begin{aligned}
G & (\rho_1,\rho_2,g_1,g_2) := \begin{pmatrix}
	G_0 \\
	G_1 \\
	\vdots \\
	G_n \\
	G_{n+1}
\end{pmatrix} (\rho_1,\rho_2,g_1,g_2)
: = \begin{pmatrix}
	P_2F(\rho_1+\rho_2,g_1+g_2) \\
	\int_{\Omega_{\rho_1+\rho_2}}  x_1 \,\mathrm dx
	\\
	\vdots
	\\
	\int_{\Omega_{\rho_1+\rho_2}}  x_n \,\mathrm dx \\
	(\id - P_2)F(\rho_1+\rho_2,g_1+g_2)
\end{pmatrix}.
\end{aligned}
\end{align*} 
$P_l\colon h^{l+\alpha}(\S)\to X_l$ denotes the projection onto $X_l$. If it is clear which $l\in\N$ is to be used, we write $P$ for $P_l$. 

Note that for $m=2,3$, and $g_1\in X_2$, $G$ is also well-defined and we have
\begin{align*}
G \in C\left(\U_{m} \times \U_{m}^\bot \times X_2\times K, X_{m-1}\times \R^n\times K\right).
\end{align*}
By the condition $\int_{\Omega_\rho}  x_i \,\mathrm dx=0$ for $i=1,\ldots,n$, we achieve that the center of mass of 
$\Omega_\rho$ is in the origin and thus eliminate the possibility of translations, and thus, admissible $\rho$ will be in the set 
\[
\mathcal M = \setcond{\rho \in U_{\gamma,3}}{\int_{\Omega_\rho}  x_i \,\mathrm dx=0,\, i=1,\ldots,n}.
\]
As a direct consequence, we have

\begin{lemma}
$\Omega_\rho$ with barycenter zero admits a solution to \eqref{eq:serrin_perturbed_rho} for given 
$g\in h^{1 + \alpha}(\S)$ if and only if $G(\rho_1,\rho_2,g_1, g_2) = 0$, with $\rho = \rho_1+\rho_2$ and $g = g_1+g_2$. 
\end{lemma}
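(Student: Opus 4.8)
The plan is to unpack the definition of $G$ and reduce the claim to the characterisation of solutions via $F$ recorded right after \eqref{eq:function_f}, namely that for $g\in h^{1+\alpha}(\S)$ the domain $\Omega_\rho$ admits a solution $u_\rho$ to \eqref{eq:serrin_perturbed_rho} if and only if $F(\rho,g)=0$. I would begin by fixing the decompositions: the argument $(\rho_1,\rho_2,g_1,g_2)$ ranges over $\U_2\times\U_2^\bot\times X_1\times K$, so setting $\rho:=\rho_1+\rho_2\in U_{\gamma,2}$ and $g:=g_1+g_2\in h^{1+\alpha}(\S)$ produces exactly the canonical splittings induced by $U_{\gamma,2}=\U_2\oplus\U_2^\bot$ and $h^{1+\alpha}(\S)=X_1\oplus K$. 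Here I would remark that $K$ is the finite-dimensional space spanned by the first-order spherical harmonics $h_{1,j}$, which are smooth, so the $L^2$-orthogonal projection onto $K$ and the complementary projection $P$ act coherently on every little H\"older space $h^{l+\alpha}(\S)$; in particular $P$ applies to $F(\rho,g)\in h^{1+\alpha}(\S)$ with $PF(\rho,g)\in X_1$, which is what the symbol $P_2$ in the definition of $G$ is to be read as when the argument is only $h^{1+\alpha}$-regular.

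Next I would split the equation $G(\rho_1,\rho_2,g_1,g_2)=0$ componentwise. Since $P$ and $\id-P$ are complementary projections with $h^{1+\alpha}(\S)=X_1\oplus K$, the two equations $G_0=PF(\rho,g)=0$ and $G_{n+1}=(\id-P)F(\rho,g)=0$ hold simultaneously if and only if $F(\rho,g)=PF(\rho,g)+(\id-P)F(\rho,g)=0$. The remaining components vanish, i.e. $G_i=\int_{\Omega_\rho}x_i\,\mathrm dx=0$ for $i=1,\ldots,n$, precisely when the barycenter $|\Omega_\rho|^{-1}\int_{\Omega_\rho}x\,\mathrm dx$ of the bounded domain $\Omega_\rho$ is the origin. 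Combining these two observations, $G(\rho_1,\rho_2,g_1,g_2)=0$ if and only if $F(\rho,g)=0$ and $\Omega_\rho$ has barycenter zero, and the former is equivalent, by the cited characterisation (with $u_\rho$ the unique $h^{2+\alpha}(\overline{\Omega_\rho})$-solution of \eqref{eq:serrin_perturbed_rho-1} supplied by elliptic regularity), to $\Omega_\rho$ admitting a solution to \eqref{eq:serrin_perturbed_rho} for $g$.

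This is really a bookkeeping argument, so I do not anticipate a genuine obstacle; the only point deserving a word of care is the compatibility of the projections $P_l\colon h^{l+\alpha}(\S)\to X_l$ across the scale $l$, which is what legitimises writing $PF(\rho,g)$ and $(\id-P)F(\rho,g)$ for $F(\rho,g)$ merely in $h^{1+\alpha}(\S)$, and which follows at once from the smoothness and finite dimensionality of $K$.
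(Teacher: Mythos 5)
Your proof is correct and follows exactly the route the paper has in mind: the paper states this lemma as a direct consequence of the construction of $G$, and your argument is precisely the bookkeeping that makes this explicit — the decomposition of $F(\rho,g)=0$ into the $X_1$- and $K$-components (components $G_0$ and $G_{n+1}$), the identification of the components $G_1,\ldots,G_n$ with the barycenter constraint, and the already-noted equivalence between $F(\rho,g)=0$ and solvability of \eqref{eq:serrin_perturbed_rho}. Your remark on the $l$-compatibility of the projections $P_l$ via the finite dimensionality and smoothness of $K$ is a welcome precision that the paper leaves implicit.
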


\subsection{Bijectivity of the partial derivative of \texorpdfstring{$G$}{F}}

The mapping $G$ has the following regularity properties.
\begin{lemma}
\label{lem:regularity_g}
$G$ is Fr\'echet-differentiable as a map from $\U_{3} \times \U_{3}^\bot\times K$ 
to $X_1\times \R^n\times K$.  
We have 
\[
\partial_{\rho_1,\rho_2,g_2}G(\rho_1,\rho_2,g_1, g_2)\in 
\L(X_3\times K \times K, X_1\times \R^n\times K),
\]
for $(\rho_1+\rho_2,g_1+g_2) \in U_{\gamma,3} \times h^{1 + \alpha}(\S)$, 
which can be extended to 
\[
\partial_{\rho_1,\rho_2,g_2}G(\rho_1,\rho_2,g_1, g_2)\in 
\L(X_2\times K \times K, X_1\times \R^n\times K).
\]
Furthermore,  
\begin{align*}
G\in\  
& C(\U_{2} \times \U_{2}^\bot \times X_1\times K, X_{1}\times \R^n\times K) \\
& \cap\  
 C^{1} (\U_{3} \times \U_{3}^\bot \times X_1\times K, X_1\times \R^n\times K) .
\end{align*}
\end{lemma}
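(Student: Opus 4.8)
The plan is to reduce every assertion to the already-established regularity of $F$ in \eqref{rem:regularity_f} together with elementary properties of the three ingredients out of which $G$ is built: the linear projection $P_2$ (and $\id-P_2$), the linear map $g_2 \mapsto g_1+g_2$, $(\rho_1,\rho_2)\mapsto \rho_1+\rho_2$, and the volume-moment functionals $\rho \mapsto \int_{\Omega_\rho} x_i\,dx$. Since $P_l\colon h^{l+\alpha}(\S)\to X_l$ is a bounded linear projection (its kernel $K$ is finite-dimensional and spanned by smooth functions, so it acts between every pair of Hölder scales and commutes with the inclusions $X_3\hookrightarrow X_2\hookrightarrow X_1$), the components $G_0=P_2 F(\rho_1+\rho_2,g_1+g_2)$ and $G_{n+1}=(\id-P_2)F(\rho_1+\rho_2,g_1+g_2)$ inherit both the continuity and the $C^1$ statements from $F$ directly by composition with bounded linear maps; and the extension of $\partial_{\rho_1,\rho_2,g_2}G$ to the larger space $X_2\times K\times K$ comes from part (3) of the preceding lemma, namely $\partial_\rho F(\rho,g)\in\L(h^{2+\alpha}(\S),h^{1+\alpha}(\S))$, again composed with $P_2$ and $\id-P_2$.

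Next I would treat the moment components $G_i(\rho_1,\rho_2,g_1,g_2)=\int_{\Omega_{\rho_1+\rho_2}} x_i\,dx$ for $i=1,\dots,n$. The point is that these do not involve the state $u_\rho$ at all, only the geometry of $\Omega_\rho$, so there is no loss of derivatives here. Pulling back to the fixed ball via $\theta_\rho$ one writes $G_i(\rho_1,\rho_2,\cdot,\cdot)=\int_{\B}(\theta_\rho)_i(x)\,J_{\theta_\rho}(x)\,dx$, where $\theta_\rho(x)=x+\varphi(|x|-1)\rho(x/|x|)\,x/|x|$ depends polynomially (in fact affinely, up to the Jacobian determinant which is a polynomial in the first derivatives of $\rho$) on $\rho$ with smooth coefficients. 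Hence $\rho\mapsto G_i(\rho,\cdot)$ is a smooth — in particular $C^1$ — map from $U_{\gamma,1}$ (and a fortiori from $U_{\gamma,2}$, $U_{\gamma,3}$) into $\R$, and its derivative in the direction $\tilde\rho$ is a bounded linear functional on $h^{1+\alpha}(\S)$; indeed the classical Hadamard formula gives $\partial_\rho G_i(\rho,\cdot)[\tilde\rho]=\int_{\Gamma_\rho} x_i\,(\theta_*^\rho\tilde\rho)\,(\nu_\rho\cdot\nu_0)\,d\sigma$, a surface integral against an $L^1$ (indeed continuous) density, so it extends continuously to $h^{2+\alpha}$ and even to $h^{1+\alpha}$. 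This furnishes the $\R^n$-valued block of both $\partial_{\rho_1,\rho_2,g_2}G$ and its extension, and the continuous dependence on $(\rho_1,\rho_2,g_1,g_2)$ of these derivatives.

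Finally I would assemble the blocks: Fréchet-differentiability of $G$ on $\U_3\times\U_3^\bot\times X_1\times K$, the membership $\partial_{\rho_1,\rho_2,g_2}G(\rho_1,\rho_2,g_1,g_2)\in\L(X_3\times K\times K,\,X_1\times\R^n\times K)$, its extension to $X_2\times K\times K$, and the two displayed regularity memberships all follow because a map into a finite product of Banach spaces is $C^1$ (resp.\ continuous) iff each component is, and because $X_1=X_1\times\R^n\times K$ after dropping the (irrelevant) $g_1$-argument matches the way the blocks land: $G_0\in X_1$ from $P_2 F$ with $F$-values in $h^{1+\alpha}(\S)$, $(G_1,\dots,G_n)\in\R^n$ from the moment functionals, and $G_{n+1}\in K$ from $(\id-P_2)F$. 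I expect the only genuinely delicate point to be the differentiability of $\rho\mapsto u_\rho$ underlying the $F$-blocks, i.e.\ the loss-of-derivatives bookkeeping — but that is precisely what was already settled in the preceding lemma \eqref{rem:regularity_f} and in the computation of $\partial_\rho F$, so here it is only a matter of composing with the bounded linear maps $P_2$, $\id-P_2$ and the inclusions, and of verifying that the finite-rank projection $P_l$ behaves uniformly across the Hölder scale; everything else is the soft, routine verification that differentiability passes through finite products and affine substitutions.
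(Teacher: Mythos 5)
Your proposal matches the paper's proof in both structure and substance: you reduce the $G_0$, $G_{n+1}$ blocks to the regularity of $F$ composed with the bounded projections $P$, $\id-P$, treat the moment functionals $G_1,\ldots,G_n$ directly via the Hadamard surface-integral formula (your $\int_{\Gamma_\rho} x_i\,(\theta_*^\rho\tilde\rho)\,(\nu_\rho\cdot\nu_0)\,d\sigma$ is, using $(\nu_\rho\cdot\nu_0)=|\nabla N_\rho|^{-1}$, exactly the paper's $\int_{\Gamma_{\rho}}\frac{1}{|\nabla N_\rho|}\theta_*^{\rho}\tilde\rho_i\,\sigma_j\,d\sigma$), observe that this density is continuous so the derivative extends to lower Hölder scales, and then assemble by componentwise $C^1$-ness of maps into a product. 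This is precisely the paper's argument, only spelled out in a bit more detail.
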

In view of the application of the modified function theorem introduced in Section \ref{sect:modified_ift}, we
also need the following observation concerning the regularity of $G$.
For $m=2,3$, we have
\begin{equation}
\label{eq:regularity_g_general}
\begin{aligned}
G\in\  
& C\left(\U_{m} \times \U_{m}^\bot \times X_2\times K, X_{m-1}\times \R^n\times K\right) \\
& \cap\  
 C^{1} \left(\U_{m+1} \times \U_{m+1}^\bot \times X_2\times K, X_{m-1}\times \R^n\times K\right).
\end{aligned}
\end{equation}
as well as for the extension of the partial derivative 
\begin{equation}
\label{eq:regularity_g_derivative_extension}
\partial_{\rho_1,\rho_2,g_2}G(\rho_1,\rho_2,g_1, g_2)\in 
\L(X_m\times {K\times K}, X_{m-1}\times \R^n\times K), 
\end{equation}
where $(\rho_1,\rho_2,g_1, g_2) \in X_{m+1}\times K \times X_2\times K$.

\begin{proof}
We have for $i=1,2$ and 
$\tilde\rho_1 \in X_3\subset h^{3+\alpha}(\S)$, $\tilde\rho_2 \in K \subset h^{3+\alpha}(\S)$ 
\begin{align*}
 \partial_{\rho_i} G_0 (\rho_1,\rho_2,g_1, g_2)[\tilde\rho_i] 
& = 	\partial_{\rho_i} PF(\rho_1+\rho_2,g_1+g_2)[\tilde\rho_i] & \in X_1,  \\
 \partial_{\rho_i} G_{n+1} (\rho_1,\rho_2,g_1, g_2)[\tilde\rho_i] 
& = 	\partial_{\rho_i} (\id-P)F(\rho_1+\rho_2,g_1+g_2)[\tilde\rho_i] & \in K 
\end{align*}
and 
further for $j = 1,\ldots,n$
\begin{equation*}
\partial_{\rho_i} G_j (\rho_1,\rho_2,g_1, g_2)[\tilde\rho_i] 
= \int_{\Gamma_{\rho_1+\rho_2}}  \frac{1}{|\nabla N_\rho|}\theta_*^{\rho} \tilde\rho_i \cdot \sigma_j\,\mathrm d\sigma, \quad \text{for } j=1,\ldots,n.
\end{equation*}
These expressions are still well-defined and of the same regularity for $\tilde\rho\in h^{2+\alpha}(\S)$, 
implying 
the existence of an extension of $\partial_{\rho_1,\rho_2} G(\rho_1,\rho_2,g_1, g_2)$ onto $X_2\times K$ 
and thus
\[ 
\partial_{\rho_1,\rho_2} G (\rho_1,\rho_2,g_1, g_2)\in \L(X_3\times K, X_1\times\R^n\times K)\cap \L(X_2\times K, X_1\times\R^n\times K).
\]
For the $g_2$-partial derivative and $\tilde {g_2} \in K$, 
we get 
\begin{align*}
& \partial_{g_2} G_0 (\rho_1,\rho_2,g_1, g_2)[\tilde {g_2}] 
= 	\partial_{g_2} PF (\rho_1+\rho_2,g_1+g_2)[\tilde {g_2}], \\
& \partial_{g_2} G_j (\rho_1,\rho_2,g_1, g_2)[\tilde {g_2}] = 0 \quad\text{and} \\
& \partial_{g_2} G_{n+1} (\rho_1,\rho_2,g_1, g_2)[\tilde {g_2}] 
= 	\partial_{g_2} (\id-P)F (\rho_1+\rho_2,g_1+g_2)[\tilde {g_2}].
\end{align*}
This implies 
$\partial_{g_2} G (\rho_1,\rho_2,g_1, g_2)\in \L(K, X_1\times\R^n\times K)$.
\end{proof}

At zero, the partial derivative $\partial_{\rho_1,\rho_2, g_2} G$ is bijective. We abbreviate $(0,0,0,0)$ by $(0)$.
\begin{lemma}
\label{lem:regularity_g_zero}
$G$ is Fr\'echet-differentiable at $(\rho_1,\rho_2,g_1, g_2) = (0)$, and we have 
\begin{align*}
&\partial_{\rho_1,\rho_2,g_2} G(0) \in \L(X_3\times K \times K, X_2\times\R^n\times K)
\end{align*}
with 
\begin{align}
\label{eq:derivative_g}
\begin{aligned}
\partial_{\rho_1,\rho_2, g_2} G(0)[\tilde\rho_1, \tilde \rho_2, \tilde g_2] 
& = \begin{pmatrix}
		\frac{1}{n}\left(\mathscr N - \id \right)\tilde\rho_1 \\
		\omega_n^{1/2}\tilde\rho_2 \\
		\tilde g_2
\end{pmatrix}.
\end{aligned}
\end{align}
Indeed, for arbitrary $m\in\N$, we have 
\begin{equation*}
\partial_{\rho_1,\rho_2,g_2} G(0) \in \L(X_{m+1}\times K \times K, X_m\times\R^n\times K) .
\end{equation*}
Further, $\partial_{\rho_1,\rho_2, g_2} G(0)$ is invertible with 
\[
\partial_{\rho_1,\rho_2, g_2} G(0)^{-1} \in \L(X_m\times\R^n\times K, X_{m+1}\times K \times K) 
\]
for $m,\in\N$, and 
\begin{equation}
\label{eq:derivative_g_inverse}
\partial_{\rho_1,\rho_2, g_2} G(0)^{-1}[\phi, \alpha_1,\ldots,\alpha_n, \psi] 
= \begin{pmatrix}
n\left(\left(\mathscr N -1\right)\big|_{X_2} \right)^{-1}\phi \\
\omega_n^{-1/2}\sum_{j=1}^n \alpha_j h_{1,j} \\
\psi
\end{pmatrix} \in X_{m+1}\times K \times K 
\end{equation}
where $\phi \in X_m$, $\alpha_j\in \R$ for $j=1,\ldots,n$ and $\psi\in K$.    
\end{lemma}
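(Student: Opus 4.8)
The plan is to compute the three diagonal blocks of $\partial_{\rho_1,\rho_2,g_2}G(0)$ one component at a time, identify the result with the right–hand side of \eqref{eq:derivative_g}, and then read off invertibility and the explicit inverse \eqref{eq:derivative_g_inverse} from the resulting block structure. Only one step requires genuine analysis, namely that the inverse of the first block gains a derivative in the little Hölder scale; everything else is bookkeeping built on the shape calculus already set up above.

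I would start with the blocks coming from $F$, i.e.\ $G_0=PF$ and $G_{n+1}=(\id-P)F$. Recall that $\partial_\rho F(0,0)[\tilde\rho]=\tfrac1n(\mathscr N-\id)\tilde\rho$, and since $F(\rho,g)=\theta_\rho^*(\partial_{\nu_\rho}u_\rho)+\tfrac1n+g$ depends on $g$ only through the additive term, $\partial_g F(0,0)[\tilde g]=\tilde g$; the associated remainders are $o(\|\cdot\|)$ by the $C^1$-regularity recorded in \eqref{rem:regularity_f} together with the fact, observed in the remark following \eqref{eq:dirichlet_p}, that for $\rho_0=0$ the solution of \eqref{eq:dirichlet_p} is simply $p=\tfrac1n\tilde\rho$, so the derivative already takes values in the finer space $h^{2+\alpha}(\S)$. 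Splitting $\tilde\rho=\tilde\rho_1+\tilde\rho_2$ with $\tilde\rho_1\in X_3$, $\tilde\rho_2\in K$, and using that $\mathscr N-\id$ is diagonal on spherical harmonics with eigenvalue $k-1$ on $H_k$ — hence annihilates $K$ and maps $X_3$ into $X_2$, so that $P$ restricts to the identity on its image — one obtains $\partial_{\rho_1}G_0(0)[\tilde\rho_1]=\tfrac1n(\mathscr N-\id)\tilde\rho_1$, $\partial_{\rho_2}G_0(0)=0$, $\partial_{g_2}G_0(0)[\tilde g_2]=P\tilde g_2=0$, while $\partial_{\rho_1}G_{n+1}(0)=\partial_{\rho_2}G_{n+1}(0)=0$ and $\partial_{g_2}G_{n+1}(0)[\tilde g_2]=(\id-P)\tilde g_2=\tilde g_2$ since $\tilde g_2\in K$.

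Next I would treat the volume components $G_j=\int_{\Omega_{\rho_1+\rho_2}}x_j\,\mathrm dx$ for $j=1,\dots,n$, which do not involve $g_2$. By the Hadamard shape–derivative formula (see \citep[Sect.\ 5.6]{henrot2018shape}), and since $|\nabla N_0|=1$ and $\nu_0(\zeta)=\zeta$, their derivative at $\rho=0$ in a direction $\tilde\rho$ equals $\int_\S x_j\,\tilde\rho\,\mathrm d\sigma=\langle\tilde\rho,x_j\rangle_{L^2(\S)}$; this vanishes on $X_3$ by the defining condition of $X_l$, while on $K$, writing $x_j=\omega_n^{1/2}h_{1,j}$ and using the $L^2(\S)$-orthonormality of the $h_{1,k}$, it equals $\omega_n^{1/2}$ times the $j$-th coefficient of $\tilde\rho_2$, that is, the $j$-th component of $\omega_n^{1/2}\tilde\rho_2$ under the identification $K\cong\R^n$. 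Collecting the three blocks yields \eqref{eq:derivative_g}; Fréchet (and not merely Gâteaux) differentiability follows because each block is, the remainders being controlled by the $C^1$-regularity of $F$ and the smooth dependence of $\Omega_\rho$ on $\rho$. The mapping property $\partial_{\rho_1,\rho_2,g_2}G(0)\in\L(X_{m+1}\times K\times K,\,X_m\times\R^n\times K)$ for every $m\in\N$ is then immediate from $\mathscr N\in\L(h^{m+1+\alpha}(\S),h^{m+\alpha}(\S))$ and the trivial boundedness of the two finite–dimensional blocks.

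Finally, for invertibility I would note that in the spherical–harmonic basis \eqref{eq:derivative_g} is block–diagonal: the middle block $\tilde\rho_2\mapsto\omega_n^{1/2}\tilde\rho_2$ is an isomorphism $K\to\R^n$ with inverse $(\alpha_1,\dots,\alpha_n)\mapsto\omega_n^{-1/2}\sum_j\alpha_j h_{1,j}$, the last block is $\id_K$, and the first block $\tfrac1n(\mathscr N-\id)$ restricted to $X_{m+1}$ is the Fourier multiplier $h_{k,j}\mapsto\tfrac{k-1}{n}h_{k,j}$, all of whose multipliers are nonzero (the eigenvalue-$1$ eigenfunctions of $\mathscr N$, namely the degree-$1$ harmonics spanning $K$, having been removed), so its formal inverse is the multiplier of symbol $\tfrac{n}{k-1}$. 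The main obstacle is to show that this inverse maps $X_m$ boundedly into $X_{m+1}$, i.e.\ gains one derivative in the little Hölder scale. This follows because $\mathscr N$ is a classical elliptic pseudodifferential operator of order one on the compact manifold $\S$ — checking on each $H_k$ gives the explicit identity $\mathscr N=\bigl(-\Delta_\S+(\tfrac{n-2}{2})^2\bigr)^{1/2}-\tfrac{n-2}{2}$ — so a parametrix of order $-1$ exists and, in the little Hölder scale $h^{m+\alpha}(\S)=\overline{C^\infty(\S)}^{\,C^{m+\alpha}}$ (cf.\ \citep{lunardi2012analytic}), yields, modulo the finite–dimensional kernel $K$ that has been quotiented out, a bounded two–sided inverse $(\mathscr N-\id)^{-1}\colon X_m\to X_{m+1}$; alternatively one can invoke the characterisation of the domains of fractional powers of $-\Delta_\S$ on $h^\alpha(\S)$. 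Assembling the three inverted blocks then gives \eqref{eq:derivative_g_inverse} and $\partial_{\rho_1,\rho_2,g_2}G(0)^{-1}\in\L(X_m\times\R^n\times K,\,X_{m+1}\times K\times K)$ for every $m\in\N$.
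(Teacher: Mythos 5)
Your proof is correct and follows essentially the same block-by-block computation as the paper's: each entry of $\partial_{\rho_1,\rho_2,g_2}G(0)$ is evaluated directly, the block-diagonal structure is read off, and the inverse is assembled. The only difference is that the paper simply recalls that $\mathscr N - \id$ is bijective in $\L(X_{m+1},X_m)$, whereas you justify this via the order-one elliptic pseudodifferential structure of $\mathscr N$ (equivalently, Schauder estimates for the Robin problem $-\Delta p=0$, $\partial_\nu p - p=\phi$ on $\B$ after quotienting out $K$); this is a valid and somewhat more detailed elaboration of the same point.
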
 

\begin{proof}
Let $m\in \N$.
Let $(\tilde\rho_1, \tilde \rho_2, \tilde g_2)\in  X_{m+1} \times  K \times K$ and $j=1,\ldots,n$. 
We calculate
\begin{align*}
& \partial_{\rho_1} G_0 (0)[\tilde\rho_1] 
= 	\partial_{\rho_1} PF(0)[\tilde\rho_1] = \frac{1}{n}\left(\mathscr N - \id\right)\tilde\rho_1, \\
& \partial_{\rho_2} G_0 (0)[\tilde\rho_2] 
= 	\partial_{\rho_2} PF(0)[\tilde\rho_2] = 0, \\
& \partial_{\rho_1} G_j (0)[\tilde\rho_1] 
= \int_{\S}  \tilde\rho_1\sigma_j\,\mathrm d\sigma 
= \omega_n^{1/2}\int_{\S}  \tilde\rho_1 h_{1,j}\,\mathrm d\sigma = 0, \\
& \partial_{\rho_2} G_j (0)[\tilde\rho_2] 
= \int_{\S}  \tilde\rho_2\sigma_j\,\mathrm d\sigma 
= \omega_n^{1/2}\int_{\S}  \tilde\rho_2 h_{1,j}\,\mathrm d\sigma = \omega_n^{1/2}\left(\tilde\rho_2\right)_j, \\
& \partial_{\rho_i} G_{n+1} (0)[\tilde\rho_i] 
= 	\partial_{\rho_i} (\id-P)F(0)[\tilde\rho_i] = 0, \quad \text{for } i=1,2.
\end{align*}
$\mathscr N$ again denotes the Dirichlet-to-Neumann operator. Recall that the linear operator $\left(\mathscr N - \id\right)$ is bijective as an operator in $\L(X_{m+1}, X_m)$. 
For the $g_2$-partial derivative, we find 
\begin{align*}
& \partial_{g_2} G_0 (0)[\tilde {g_2}] 
= 	\partial_{g_2} PF (0)[\tilde {g_2}] = 0, \\
& \partial_{g_2} G_j (0)[\tilde {g_2}] = 0 \quad\text{and} \\
& \partial_{g_2} G_{n+1} (0)[\tilde {g_2}] 
= 	\partial_{g_2} (\id-P)F (0)[\tilde {g_2}] = \tilde g_2.
\end{align*}
This implies \eqref{eq:derivative_g}.
We directly arrive at \eqref{eq:derivative_g_inverse} and also at the regularity properties of $\partial_{\rho_1,\rho_2,g_2}G(0)^{-1}$. 
\end{proof}

\section{A modified implicit function theorem}
\label{sect:modified_ift}

To arrive at the existence, uniqueness and at a stability result for Problem \ref{problem:eins}, we introduce a modified version of the implicit function theorem, Theorem \ref{thm:ift_part2}. Because of the regularity issues stated in Remark \ref{rem:regularity_f}, we are not able to apply the classical implicit function theorem. 
In preparation, we need

\begin{theorem}\label{thm:ift_part1}
Assume the following. 
\begin{enumerate}[label=(\Roman*)]
	\item Let $\X_0, \X_1, \Y, \Z_0, \Z_1$ be Banach spaces with $\X_1 \hookrightarrow \X_0$ and 
	$\Z_1 \hookrightarrow \Z_0$. 
	Let $D_1\subset D_0$ be open sets such that $(0,0)\in D_j \subset \X_j\times \Y$ for $j=0,1$.
	\item {Let $F\in C^1(D_1, \Z_0)\cap C(D_0, \Z_0)$ with $F(0,0)=0$ and 
	$\partial_x F\in C(D_1, \L(\X_0, \Z_0))$, which is to be understood such that for $(x,y)\in D_1$, 
	the partial deriative $\partial_x F(x,y)\in \L(\X_1, \Z_0)$ can be extended to 
	$\overline{\partial_x F}(x,y)\in \L(\X_0, \Z_0)$ and $\overline{\partial_x F} \in C(D_1, \L(\X_0, \Z_0))$.}
	\item We have $F\colon D_1\to \Z_1$ and $F$ is Fr\'echet-differentiable at $(0,0)$, hence 
	$\partial_x F(0,0)\in \L(\X_1, \Z_1)$ and $\partial_y F(0,0)\in \L(\Y, \Z_1)$.
	\item The inverse $\partial_x F(0,0)^{-1} \in \L(\Z_1, \X_1)\cap \L(\Z_0, \X_0)$ exists.
\end{enumerate}
Then there exist neighbourhoods of zero $0\in U_0\subset \X_0$, $0\in U_1\subset \X_1$ and $0\in V\subset \Y$, as well as a function $u\colon V\to U_0$ such that 
\begin{enumerate}[label=(\roman*)]
	\item $F(u(y),y)=0$ for all $y\in V$, $u(0)=0$, and 
	\item for $x_1, x_2 \in U_1$, $y\in V$ such that $F(x_i, y)=0$ for $i=1,2$, we have $x_1 = x_2$.
\end{enumerate}
\end{theorem}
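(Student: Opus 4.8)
The plan is to run a contraction-mapping argument on the weaker space $\X_0$, but to fix a right-hand side by using the stronger differentiability at the origin to control the error terms. Write $L := \partial_x F(0,0)$, which by assumption (IV) is boundedly invertible both as an operator $\Z_1 \to \X_1$ and as an operator $\Z_0 \to \X_0$. The natural fixed-point map is
\begin{equation*}
 \Phi_y(x) := x - L^{-1} F(x,y),
\end{equation*}
and a zero of $F(\cdot,y)$ is exactly a fixed point of $\Phi_y$. First I would use (III), the Fr\'echet differentiability of $F$ at $(0,0)$ into $\Z_1$, to write $F(x,y) = L x + \partial_y F(0,0) y + R(x,y)$ where $R(x,y) = o(\|x\|_{\X_1} + \|y\|_\Y)$ \emph{with values in} $\Z_1$; hence $L^{-1}R(x,y) = o(\|x\|_{\X_1} + \|y\|_\Y)$ in $\X_1$. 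So $\Phi_y(x) = -L^{-1}\partial_y F(0,0)y - L^{-1}R(x,y)$, and for $x$ in a small $\X_1$-ball and $y$ in a small $\Y$-ball, $\Phi_y$ maps that ball into itself.

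For the contraction estimate I cannot differentiate $F$ into $\Z_1$ away from the origin — that is exactly where the loss of derivatives bites — so instead I would estimate $\Phi_y(x_1) - \Phi_y(x_2)$ in the $\X_0$-norm using (II): on $D_1$ the extended derivative $\overline{\partial_x F}(x,y) \in \L(\X_0,\Z_0)$ is continuous in $(x,y)$, and $\overline{\partial_x F}(0,0) = L$ (as an element of $\L(\X_0,\Z_0)$). Then by the mean value inequality
\begin{equation*}
 \|\Phi_y(x_1)-\Phi_y(x_2)\|_{\X_0} = \Big\| L^{-1}\!\!\int_0^1 \big(L - \overline{\partial_x F}(x_2 + t(x_1-x_2),y)\big)(x_1-x_2)\,dt \Big\|_{\X_0} \le \tfrac12 \|x_1 - x_2\|_{\X_0},
\end{equation*}
where the factor $\tfrac12$ comes from shrinking $D_1$ so that $\|L^{-1}\|_{\L(\Z_0,\X_0)}\,\|L - \overline{\partial_x F}(x,y)\|_{\L(\X_0,\Z_0)} \le \tfrac12$ there, which is possible by continuity of $\overline{\partial_x F}$ and $\overline{\partial_x F}(0,0)=L$. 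Combining the self-mapping property in the $\X_1$-ball with the $\X_0$-contraction, a standard iteration ($x_{k+1} := \Phi_y(x_k)$, $x_0 := 0$) produces a sequence that stays in the small $\X_1$-ball and is Cauchy in $\X_0$; its $\X_0$-limit $u(y)$ solves $F(u(y),y)=0$. Local uniqueness in the $\X_1$-ball $U_1$ is immediate from the $\X_0$-contraction, since any two solutions in $U_1 \subset \X_0$ are fixed points of $\Phi_y$ and the contraction forces them to coincide; this gives (i) and (ii) with $U_0$ the $\X_0$-ball, $U_1$ the $\X_1$-ball, and $V$ the $\Y$-ball.

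The main obstacle is the mismatch of norms: the self-mapping estimate lives in $\X_1$ (where one has differentiability at $0$ but no uniform control along segments) while the contraction estimate lives in $\X_0$ (where one has the continuous extended derivative but the iterates a priori need not stay bounded in $\X_1$). The delicate point is therefore to verify that the iterates $x_k = \Phi_y^k(0)$ remain in the small $\X_1$-ball: this follows because $\|x_k\|_{\X_1} \le \|L^{-1}\partial_y F(0,0)y\|_{\X_1} + \|L^{-1}R(x_{k-1},y)\|_{\X_1}$, and the $o(\cdot)$-bound on $R$ into $\Z_1$ lets one absorb the second term, giving a uniform $\X_1$-bound by induction provided $\|y\|_\Y$ is small enough. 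Once the iterates are trapped in the $\X_1$-ball, the $\X_0$-contraction does the rest; note that we do not claim $u \in C^1$ or even continuity of $y\mapsto u(y)$ here — that, and the sharper statements, are presumably deferred to the full modified implicit function theorem (Theorem~\ref{thm:ift_part2}) which builds on this one.
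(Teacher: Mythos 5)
Your proposal is correct and follows essentially the same route as the paper's proof: the same fixed-point map $\Phi_y(x)=x-\partial_x F(0,0)^{-1}F(x,y)$, the same two-norm split (self-mapping of a small $\X_1$-ball via Fr\'echet differentiability at the origin into $\Z_1$, contraction in the $\X_0$-norm via the continuously extended derivative $\overline{\partial_x F}\in C(D_1,\L(\X_0,\Z_0))$), and the same iteration $x_0=0$, $x_{k+1}=\Phi_y(x_k)$ producing an $\X_0$-Cauchy sequence whose limit is a zero of $F(\cdot,y)$. The only point you gloss over is the final passage to the limit, where one explicitly needs the assumption $F\in C(D_0,\Z_0)$ to conclude $F(u(y),y)=0$ from $F(x_k,y)=\partial_x F(0,0)(x_k-x_{k+1})\to 0$, which is exactly how the paper closes the argument.
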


\begin{proof}
Let $\varepsilon, \delta >0$ -- we will redefine both later -- and define
\begin{align*}
U_1 := \setcond{x\in \X_1}{\norm{x}{\X_1}\leq \varepsilon}, \\
U_0 := \setcond{x\in \X_0}{\norm{x}{\X_0}\leq C\varepsilon}, \\
V:= \setcond{y\in \Y}{\norm{y}{\Y}< \delta},
\end{align*}
with $C>0$ a constant satisfying $\norm{x}{\X_0}\leq C\norm{x}{\X_1}$, thus $U_1\subset U_0$.

Step 1: Show that for all $y\in V$, the function 
\begin{equation*}
\Phi_y(x) := x - \partial_x F(0,0)^{-1} F(x,y) = \partial_x F(0,0)^{-1}\left(\partial_x F(0,0)x - F(x,y)\right)
\end{equation*}
is a contraction mapping from $\left( U_1, \norm{\cdot}{\X_0}\right)$ to itself, provided that $\varepsilon, \delta >0$ are sufficiently small. 

As the fundamental theorem of calculus holds on Banach spaces as well, we have for $F\in C^1(D_1, \Z_0)$ 
and for all $x_1, x_2\in D_1$ 
\begin{equation}
\label{eq:fund_calc_F}
F(x_1, y) - F(x_2,y) = \int_0^1 \partial_x F(x_2 + t(x_1 - x_2), y)(x_1 - x_2)\,\mathrm dt.
\end{equation}
Note that $\partial_x F(x_2 + t(x_1 - x_2), y) \in \L(\X_1, \Z_0)$ with extension in 
$\L(\X_0, \Z_0)$.

Now let $x_1, x_2\in U_1$, $y\in V$. Then $(x_j,y)\in D_1$ for $j=1,2$, and we use \eqref{eq:fund_calc_F} to arrive at
\[
\Phi_y(x_1) - \Phi_y(x_2) =
\partial_x F(0,0)^{-1} \left[
\int_0^1 \left( \partial_x F(0,0) - \partial_x F(x_2 + t(x_1 - x_2), y) \right)\,\mathrm dt (x_1 - x_2) 
\right].
\]
By choosing $\varepsilon, \delta >0$ smaller, if necessary, we get
\begin{align}
\label{eq:contraction}
\begin{aligned}
\norm{\Phi_y(x_1) - \Phi_y(x_2)}{\X_0} 
& \leq \norm{\partial_x F(0,0)^{-1}}{\L(\Z_0, \X_0)} \norm{x_1 - x_2}{\X_0}\\
& \quad \cdot \sup_{0\leq t\leq 1}\norm{\partial_x F(0,0) - \partial_x F(x_2 + t(x_1 - x_2), y)}{\L(\X_0, \Z_0)} 
 \\
& \leq \frac{1}{2} \norm{x_1 - x_2}{\X_0},
\end{aligned}
\end{align}
where the second inequality holds because of the condition $\partial_x F\in C(D_1,\L(\X_0, \Z_0))$ in assumption (II), which for sufficiently small $\varepsilon, \delta >0$ implies
\[
\sup_{0\leq t\leq 1}\norm{\partial_x F(0,0) - \partial_x F(x_2 + t(x_1 - x_2), y)}{\L(\X_0, \Z_0)} << 1.
\]
Next, we show that $\Phi_y(x)\in U_1$ for $(x,y)\in D_1$. 
We estimate $\norm{\Phi_y(x)}{\X_1}$ for $(x,y)\in D_1$: Choosing $\delta = \delta(\varepsilon)>0$ smaller, if necessary, we obtain 
\begin{align*}
\norm{\Phi_y(x)}{\X_1} 
& \leq \norm{\partial_x F(0,0)^{-1}}{\L(\Z_1, \X_1)}\norm{\partial_x F(0,0)x - F(x,y)}{\Z_1} \\
& \leq \norm{\partial_x F(0,0)^{-1}}{\L(\Z_1, \X_1)}\norm{\partial_y F(0,0)}{\L(\Y,\Z_1)}\delta + o(\varepsilon + \delta(\varepsilon)) \\
&  \leq \varepsilon.
\end{align*}

Step 2: Construct a mapping $u\colon U_0\to V$. 

Let $y\in V$ arbitrary but fixed. 
The inductively defined sequence $\left(x_j\right)_{j\in\N_0}$ with $x_0:=0$, $x_{j+1} := \Phi_y(x_j)\in U_1\subset U_0$ for $j\in\N$ is a Cauchy sequence and thus converges in $\norm{\cdot}{\X_0}$ to some $x_\infty\in U_0$. 
Because $F\in C(D_0, \Z_0)$, this
implies
\[
\norm{F(x_\infty, y)}{\Z_0} = \lim_{j\to\infty} \norm{F(x_j, y)}{\Z_0} 
\leq \lim_{j\to\infty} \norm{\partial_x F(0,0)}{\L(\X_0,\Z_0)} \norm{x_j - x_{j+1}}{\X_0} =0,
\]
where we used $F(x_j, y) = \partial_x F(0,0) (x_j - x_{j+1})$ for $j\in \N_0$ by definition of $\Phi_y(x)$.
We set $u(y) := x_\infty \in U_0$ for $y\in V$. 

Step 3: Show (ii) of the theorem. 

If $x_1, x_2\in U_1$ and $y\in V$ with $F(x_j, y)=0$ for $j=1,2$, then
\[
\norm{x_1 - x_{2}}{\X_0} = \norm{\Phi_y(x_1) - \Phi_y(x_2)}{\X_0} 
\overset{\eqref{eq:contraction}}{\leq} \frac{1}{2} \norm{x_1 - x_{2}}{\X_0},
\]
and therefore $x_1 = x_2$.
\end{proof}

\begin{theorem}\label{thm:ift_part2}
Assume the following. 
\begin{enumerate}[label=(\Roman*)]
	\item 
	Consider Banach spaces $\X_2 \hookrightarrow \X_1 \hookrightarrow \X_0$, 
	$\Z_2 \hookrightarrow \Z_1 \hookrightarrow \Z_0$
	and $\Y$.
	Let $D_2\subset D_1\subset D_0$ be open sets such that $(0,0)\in D_j \subset \X_j\times \Y$ for $j=0,1,2$.
	\label{thm:ift_part2_a1}
	\item For $j=1,2$, let $F\in C^1(D_j, \Z_{j-1})\cap C(D_{j-1}, \Z_{j-1})$ with $F(0,0)=0$ and further
	$\partial_x F\in C(D_j, \L(\X_{j-1}, \Z_{j-1}))$. 
	This is to be understood such that for $(x,y)\in D_j$, the partial deriative 
	$\partial_x F(x,y)\in \L(\X_j, \Z_{j-1})$ can be extended to 	
	$\overline{\partial_x F}(x,y)\in \L(\X_{j-1}, \Z_{j-1})$ and 
	$\overline{\partial_x F} \in C(D_j, \L(\X_{j-1}, \Z_{j-1}))$.
	\label{thm:ift_part2_a2}
	\item For $j=1,2$, the mapping $F\colon D_j\to \Z_j$ is Fr\'echet-differentiable at $(0,0)$.
	\label{thm:ift_part2_a3}
	\item For $j=0,1,2$, the inverse $\partial_x F(0,0)^{-1} \in \L(\Z_j, \X_j)$ exists. 
	\label{thm:ift_part2_a4}
\end{enumerate}
Then there exist neighbourhoods of zero $0\in U_0\subset \X_0$, $0\in U_1\subset \X_1$ and $0\in V\subset \Y$ such that there is a function 
$u\colon V\to U_1$ satisfying 
\begin{enumerate}[label=(\roman*)]
	\item $F(u(y),y)=0$ for all $y\in V$, $u(0)=0$, 
	\item if $x\in U_1$, $y\in V$ such that $F(x,y)=0$, then $x=u(y)$, and
	\item it holds $u\in C^1(V, \X_0)$, and 
	\begin{equation*}
	u'(y) = -\partial_x F(u(y),y)^{-1}\partial_y F(u(y),y),
	\end{equation*}
	with $\partial_x F(u(y),y)^{-1} \in \L(\Z_0, \X_0)$ and $\partial_y F(u(y),y)\in \L(\Y, \Z_0)$.
\end{enumerate}
\end{theorem}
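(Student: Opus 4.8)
The plan is to deduce Theorem~\ref{thm:ift_part2} from Theorem~\ref{thm:ift_part1} by applying that theorem to \emph{two} different nested pairs and then showing that the two resulting solution maps coincide. First I would apply Theorem~\ref{thm:ift_part1} with $(\X_0,\X_1)$, $(\Z_0,\Z_1)$, $D_0,D_1$ in the roles of $(\X_0,\X_1)$, $(\Z_0,\Z_1)$, $D_0,D_1$: its hypotheses (I)--(IV) are precisely assumptions (I), (II) with $j=1$, (III) with $j=1$, and (IV) with $j=0,1$. This produces neighbourhoods $U_0^{(0)}\subset\X_0$, $U_1^{(0)}\subset\X_1$, $V^{(0)}\subset\Y$ and a map $u^{(0)}\colon V^{(0)}\to U_0^{(0)}$ with $F(u^{(0)}(y),y)=0$, $u^{(0)}(0)=0$, and uniqueness of solutions inside $U_1^{(0)}$. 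Then I would apply Theorem~\ref{thm:ift_part1} a second time with $(\X_1,\X_2)$, $(\Z_1,\Z_2)$, $D_1,D_2$ in those roles; its hypotheses are now assumptions (I), (II) with $j=2$, (III) with $j=2$, and (IV) with $j=1,2$, yielding $U_0^{(1)}\subset\X_1$, $U_1^{(1)}\subset\X_2$, $V^{(1)}\subset\Y$ and $u^{(1)}\colon V^{(1)}\to U_0^{(1)}$ with the analogous properties. Since the radius of $U_0^{(1)}$ may be chosen arbitrarily small, I arrange $U_0^{(1)}\subset U_1^{(0)}$, so that $u^{(1)}$ takes values in $U_1^{(0)}$.

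Next I would reconcile the two maps. Set $V:=V^{(0)}\cap V^{(1)}$, $U_1:=U_1^{(0)}$, $U_0:=U_0^{(0)}$. Fix $y\in V$. Since $u^{(1)}(y)\in U_1^{(0)}$ and $F(u^{(1)}(y),y)=0$, the point $u^{(1)}(y)$ is a fixed point of the contraction $\Phi_y(x)=x-\partial_x F(0,0)^{-1}F(x,y)$ acting on $(U_1^{(0)},\norm{\cdot}{\X_0})$ that was used to construct $u^{(0)}$. The iterates $x_0=0$, $x_{k+1}=\Phi_y(x_k)$ therefore satisfy $\norm{x_k-u^{(1)}(y)}{\X_0}\le 2^{-k}\norm{u^{(1)}(y)}{\X_0}\to 0$, while by construction $x_k\to u^{(0)}(y)$ in $\X_0$; hence $u^{(0)}(y)=u^{(1)}(y)$ for every $y\in V$. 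Thus the common map $u:=u^{(0)}|_V=u^{(1)}|_V$ takes values in $U_1\subset\X_1$. This already gives (i), and (ii) follows from the uniqueness statement of the first application: if $x\in U_1$, $y\in V$ with $F(x,y)=0$, then $x$ and $u(y)$ are both solutions in $U_1^{(0)}$, so $x=u(y)$.

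For (iii) I would first upgrade continuity of $u$ from the weak to the strong norm. Running the fixed-point iteration of the \emph{second} application for two parameters $y_1,y_2$ and subtracting, using that those iterates stay in $U_1^{(1)}\subset\X_2$, the $\X_1$-contractivity of $\Phi_y$, the bound $\norm{F(x,y_1)-F(x,y_2)}{\Z_1}\le C\norm{y_1-y_2}{\Y}$ (valid for $x\in\X_2$ by $F\in C^1(D_2,\Z_1)$), and $\partial_x F(0,0)^{-1}\in\L(\Z_1,\X_1)$, one obtains by induction $\norm{x_k(y_1)-x_k(y_2)}{\X_1}\le 2C\norm{y_1-y_2}{\Y}$ uniformly in $k$, hence in the limit $\norm{u(y_1)-u(y_2)}{\X_1}\le 2C\norm{y_1-y_2}{\Y}$; so $u$ is Lipschitz from $V$ into $\X_1$. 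In particular $(u(y),y)\in D_1$ stays uniformly close to $(0,0)$ in $\X_1\times\Y$; since $\overline{\partial_x F}(0,0)=\partial_x F(0,0)$ is invertible in $\L(\X_0,\Z_0)$ with inverse in $\L(\Z_0,\X_0)$ (assumption (IV), $j=0$) and $\overline{\partial_x F}\in C(D_1,\L(\X_0,\Z_0))$, after shrinking $V$ the operator $\overline{\partial_x F}(u(y),y)$ is invertible for all $y\in V$, with $\overline{\partial_x F}(u(y),y)^{-1}\in\L(\Z_0,\X_0)$ depending continuously on $y$.

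Finally I would differentiate. Fixing $y_0\in V$ and writing $w:=u(y_0+h)-u(y_0)$, so $\norm{w}{\X_1}\le C\norm{h}{\Y}$, the fundamental theorem of calculus applied to $F\in C^1(D_1,\Z_0)$ along the segment from $(u(y_0),y_0)$ to $(u(y_0)+w,y_0+h)$, together with $F(u(y_0+h),y_0+h)=F(u(y_0),y_0)=0$, gives
\[
0=\overline{\partial_x F}(u(y_0),y_0)\,w+\partial_y F(u(y_0),y_0)\,h+R,
\]
where $R$ collects the increments of $\partial_x F$ (applied to $w$ via its $\L(\X_0,\Z_0)$-extension) and of $\partial_y F$; the continuity of $\overline{\partial_x F}$ in $\L(\X_0,\Z_0)$ and of $\partial_y F$ in $\L(\Y,\Z_0)$ at $(u(y_0),y_0)$, with $\norm{w}{\X_1}+\norm{h}{\Y}\to 0$, yield $\norm{R}{\Z_0}=o(\norm{h}{\Y})$. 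Inverting $\overline{\partial_x F}(u(y_0),y_0)$ then gives Fréchet differentiability of $u$ into $\X_0$ with $u'(y_0)=-\overline{\partial_x F}(u(y_0),y_0)^{-1}\partial_y F(u(y_0),y_0)$, and continuity of $u'$ follows by composing the continuous maps $y\mapsto(u(y),y)$, $(x,y)\mapsto\overline{\partial_x F}(x,y)^{-1}$ and $(x,y)\mapsto\partial_y F(x,y)$, which is (iii). The main obstacle is the reconciliation step combined with promoting the Lipschitz bound from $\X_0$ to the stronger norm $\X_1$ — this is exactly what lets the differentiation argument survive the loss of derivatives — while the secondary delicate point is the bookkeeping of which space each factor of the Taylor expansion lives in, the $\L(\X_0,\Z_0)$-extension of $\partial_x F$ being indispensable there.
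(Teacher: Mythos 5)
Your proposal is correct and follows the same architecture as the paper's own proof: two applications of Theorem~\ref{thm:ift_part1} (to the pair $(\X_0,\X_1,\Z_0,\Z_1)$ and to $(\X_1,\X_2,\Z_1,\Z_2)$), a Lipschitz estimate for $u$, and then differentiation using the extended derivative $\overline{\partial_x F}$. Where you differ is in rigour rather than route, and the difference is substantive. The paper's Step~1 simply asserts ``applying Theorem~\ref{thm:ift_part1} twice'' yields a single map $u\colon V\to U_1$; you justify this by observing that $u^{(1)}(y)\in U_1^{(0)}$ is a fixed point of the contraction defining $u^{(0)}$, hence the two maps agree. More importantly, the paper's Step~2 records only $u\in C^{0,1}(V,\X_0)$, and its Step~3 then absorbs $\bigl(\partial_y F(u(y+h),y)-\partial_y F(u(y),y)\bigr)[h]$ and $\int_0^1\bigl[\overline{\partial_x F}(u(y)+t(u(y+h)-u(y)),y)-\overline{\partial_x F}(u(y),y)\bigr]\mathrm dt\,[u(y+h)-u(y)]$ into $o_{\Z_0}(\norm{h}{\Y})$; both absorptions require $u(y+h)\to u(y)$ in the $\X_1$-topology, on which $\partial_y F$ and $\overline{\partial_x F}$ are assumed continuous, and the $\X_0$-Lipschitz bound alone does not deliver this. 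Your upgrade to $\norm{u(y_1)-u(y_2)}{\X_1}\le C\norm{y_1-y_2}{\Y}$, obtained by running the induction along the iterates of the \emph{second} application (which live in $\X_2$, making $F\in C^1(D_2,\Z_1)$ and $\partial_x F(0,0)^{-1}\in\L(\Z_1,\X_1)$ available), closes precisely this gap and makes the differentiation argument airtight. So: same strategy, but your version repairs an omission in the paper's Step~3.
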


\begin{proof}
Step 1: Existence and uniqueness of $u$.

Applying Theorem \ref{thm:ift_part1} twice, we arrive at the existence of neighbourhoods $0\in V\subset \Y$, $0\in U_j\subset \X_j$, $j=0,1$, and at the existence of a mapping $u\colon V\to U_1$
such that 
\begin{itemize}[label={$\circ$}]
	\item $F(u(y), y)=0$ for $y\in V$, $u(0)=0$, and   
	\item for $x_1, x_2\in U_1$, $y\in V$ with $F(x_i, y) = 0$, $i=1,2$, we have $x_1 = x_2$. 
\end{itemize}
Thus, for $x_1\in U_1$ and $y\in V$ such that $F(x,y)=0$, we have $x=u(y)$. This shows (i) and $(ii)$ of Theorem \ref{thm:ift_part2}.

Step 2: Show Lipschitz-continuity of $u$, i.e.\ $u\in C^{0,1}(V,U_0)$.

Consider $y_1, y_2\in V$. Then with $u$ as above, i.e.\ $u(y_i)\in U_1$ for $i=1,2$, we have 
\begin{equation*}
\begin{aligned}
\norm{u(y_1)-u(y_2)}{\X_0} & = \norm{\Phi_{y_1}(u(y_1)) - \Phi_{y_2}(u(y_2))}{\X_0} \\ 
& \leq \norm{\Phi_{y_1}(u(y_1)) - \Phi_{y_1}(u(y_2))}{X_0} + \norm{\Phi_{y_1}(u(y_2)) - \Phi_{y_2}(u(y_2))}{\X_0} \\
& \overset{\eqref{eq:contraction}}{\leq} \frac{1}{2} \norm{u(y_1-u(y_2))}{\X_0} \\
	&  \quad + \norm{\partial_x F(0,0)^{-1}}{\L(\Z_0, \X_0)}\norm{F(u(y_2),y_1) - F(u(y_2),y_2)}{\Z_0}.
\end{aligned}
\end{equation*}
This implies 
\begin{equation*}
\begin{aligned}
\norm{u(y_1)-u(y_2)}{\X_0} & \leq 2\norm{\partial_x F(0,0)^{-1}}{\L(\Z_0, \X_0)} \\
& \quad \cdot\sup_{0\leq t\leq 1}\norm{\partial_y F(u(y_2),y_2 + t(y_1 -y_2))}{\L(\Y,\Z_0)}\norm{y_1-y_2}{\Y} \\
& \leq L \norm{y_1-y_2}{\Y}, 
\end{aligned}
\end{equation*}
because the $\sup$-term is uniformly bounded in $V$ for sufficiently small $\varepsilon, \delta >0$ defining the sets as in the proof of Theorem \ref{thm:ift_part1}, since $\partial_y F$ is continuous around $(0,0)$.

Step 3: Show $u\in C^1(V, \X_0)$.

We have for $y, h\in V$ s.th. $y+h\in V$ 
\begin{align*}
0 & = F(u(y+h), y+h) - F(u(y), y) \\
& = F(u(y+h), y+h) - F(u(y+h), y) + F(u(y+h), y) - F(u(y), y) \\
& = \partial_y F(u(y+h),y)[h] + o_{Z_0}(\norm{h}{\Y}) + F(u(y+h), y) - F(u(y), y) \\
& = \partial_y F(u(y),y)[h] + \left(\partial_y F(u(y+h),y) + \partial_y F(u(y),y) \right)[h] + o_{Z_0}(\norm{h}{\Y})\\ 
		& \quad + F(u(y+h), y) - F(u(y), y) \\
& = \partial_y F(u(y),y)[h] + o_{Z_0}(\norm{h}{\Y}) \\
		& \quad + \int_0^1 \partial_x F(u(y) + t(u(y+h) - u(y)),y) \,\mathrm dt [u(y+h) - u(y)],
\end{align*}
and further 
\begin{align*}
& \int_0^1 \partial_x F(u(y) + t(u(y+h) - u(y)),y) \,\mathrm dt [u(y+h) - u(y)] \\
& = \partial_x F(u(y),y)[u(y+h) - u(y)] + o_{\Z_0}(\norm{h}{\Y}).
\end{align*}
Therefore, 
\[
u(y+h) - u(y) = \partial_x F(u(y),y)^{-1} \partial_y F(u(y),y)[h] + o_{\X_0}(\norm{h}{\Y}), 
\]
yielding $u\in C^1(V, \X_0)$ and (iii).

Note that $\partial_x F(u(y),y) \in \L(\X_0, \Z_0)$ is invertible for $y\in V$, since 
$\partial_x F(0,0)$ is invertible and $\norm{u(y)}{\X_1} + \norm{y}{\Y} << 1$.
\end{proof}

\section{Proof of Theorem \ref{thm:main}}
\label{sect:existence_stability_ift}

With the tool of the modified implicit function theorem, Theorem \ref{thm:ift_part2}, at hand, we are now able to prove Theorem \ref{thm:main}, that is, the existence and uniqueness of admissible sets $\Omega_\rho$ with barycenter zero that solve the perturbed overdetermined problem \eqref{eq:serrin_perturbed_rho}, as well as a stability estimate. 

\begin{remark}
Considering the somewhat unintuitive partial derivative $\partial_{\rho_1, \rho_2, g_2}$ in Lemma \ref{lem:regularity_g_zero} was necessary to arrive at bijectivity and to be able to apply Theorem \ref{thm:ift_part2}. The partial derivative $\partial_\rho G(0)$ is not bijective. 

In addition to that, keeping in mind the nature of the problem discussed in Section \ref{sect:degeneracy}, 
the set $\Omega_\rho$ will only depend on the perturbations that do not induce a mere translation of the problem. 
$\rho$ depending on $g_1$ (instead of $g$) is a consequence of that setting.
\end{remark}

\begin{proof}[{Proof of Theorem \ref{thm:main}}]
We confirm the requirements for Theorem \ref{thm:ift_part2}. 
For \ref{thm:ift_part2_a1}, we set 
\begin{align*}
\X_{j} & = h^{j+2+\alpha}(\S)\times X_2^\bot = h^{j+2+\alpha}(\S)\times K \text{ and } \\
\Z_{j} & = X_{j+1}\times \R^n \times X_{2}^\bot = X_{j+1}\times \R^n \times K 
\end{align*}
for $j=0,1,2$, $\Y  = X_2$, and $D_j$ accordingly. 
By Lemma \ref{lem:regularity_g}, \eqref{eq:regularity_g_general} and \eqref{eq:regularity_g_derivative_extension}, \ref{thm:ift_part2_a2} is satisfied.
Lemma \ref{lem:regularity_g_zero} implies \ref{thm:ift_part2_a3} and \ref{thm:ift_part2_a4}. 

Thus, Theorem \ref{thm:ift_part2} implies the existence of a neighbourhood 
$0\in V\subset X_2$ 
as well as neighbourhoods
$0\in U_j \subset h^{j+\alpha}(\S)\times X_2^\bot = h^{j+\alpha}(\S)\times K$, $j=2,3$, 
such that there is a function 
$(\rho, g_2) \colon V \to U_3$ with  
$G(\rho(g_1), g_1 + g_2(g_1))=0 $ for all $g_1\in V$ and $(\rho, g_2)(0)=0$.
 
Furthermore, $(\rho, g_2)$ is unique in $U_3$ and we have $(\rho, g_2) \in C^1(V, U_2)$.
Differentiating 
{$G(\rho(g_1), g_1 + g_2(g_1))=0$} 
with respect to $g_1$ and evaluating it at $g_1=0$ in direction $\tilde g_1$, we get 
\begin{align*}
0 & = D_{g_1} G \left(\rho_1(g_1), \rho_2(g_1), g_1 + g_2(g_1)\right)\big|_{(0)} [\tilde g_1] \\
& = D_{\rho_1, \rho_2, g_2} G (0) \left[\partial_{g_1} \rho_1(0)[\tilde g_1], \partial_{g_1} \rho_2(0)[\tilde g_1], \partial_{g_1} g_2(0)[\tilde g_1] \right] + \partial_{g_1} G(0) [\tilde g_1] \\
& = 
\begin{pmatrix}
	\partial_{\rho_1} F(0) [\partial_{g_1} \rho_1(0)[\tilde g_1]]  \\
	\int_{\S} \sigma_1 \partial_{g_1} \rho_2(0)[\tilde g_1] \,\mathrm d\sigma  \\
	\vdots \\
	\int_{\S} \sigma_n \partial_{g_1} \rho_2(0)[\tilde g_1] \,\mathrm d\sigma \\
		\partial_{g_1} g_2(0)[\tilde g_1] 
\end{pmatrix}  
+ 
\begin{pmatrix}
	\tilde g_1 \\ 0 \\ 0
\end{pmatrix}.
\end{align*}
This yields 
\begin{align*}
\begin{aligned}
\partial_{g_1} \rho_1(0)[\tilde g_1] & = \partial_{\rho_1} F(0)^{-1}[\tilde g_1], \\
\partial_{g_1} \rho_2(0)[\tilde g_1] & = 0, \text{ and} \\
\partial_{g_1} g_2(0)[\tilde g_1] & = 0, 
\end{aligned}
\end{align*}
where the last equation results from $\partial_{g_1} \rho_2(0)[\tilde g_1]\in X_2^{\bot} = K$,
and we arrive at the stability estimates in \eqref{eq:stability_ift}. 
\end{proof}

 \bibliographystyle{plainnat}
 \bibliography{stability_serrin_ift_arxiv}{}

\end{document}